\newcommand{\lr}{\mathcal{L}}
\newcommand{\e}{\mathbb{E}}
\newcommand{\br}{\mathbb{R}}
\newcommand{\pr}{\mathcal{P}}
\newcommand{\dd}{\partial}
\newcommand{\brn}{{\mathbb{R}^n}}
\newcommand{\brd}{{\mathbb{R}^d}}
\newcommand{\hv}{\hat{V}}
\newcommand{\tv}{\tilde{V}}
\newcommand{\tx}{\tilde{X}}
\newcommand{\hx}{\hat{X}}
\newcommand{\hp}{\hat{P}}
\newcommand{\hq}{\hat{Q}}
\newcommand{\hr}{\mathcal{H}}
\newcommand{\ur}{\mathcal{U}}
\newcommand{\bx}{\bar{X}}
\newcommand{\argmin}{\mathop{\rm argmin}}
\begin{document}



\title{Maximum Principle for Mean Field Type Control Problems with General Volatility Functions}

\author{Alain Bensoussan}

\address{International Center for Decision and Risk Analysis,
	Naveen Jindal School of Management, University of Texas at Dallas, USA\\ 
\email{axb046100@utdallas.edu} }

\author{Ziyu Huang}

\address{School of Mathematical Sciences, Fudan University, China \\
\email{zyhuang19@fudan.edu.cn} }

\author{Sheung Chi Phillip Yam}

\address{Department of Statistics, The Chinese University of Hong Kong, Hong Kong, China\\ 
	\email{scpyam@sta.cuhk.edu.hk} }

\maketitle


\begin{abstract}
In this paper, we study the maximum principle of mean field type control problems when the volatility function depends on the state and its measure and also the control, by using our recently developed method in [\cite{AB6}, \cite{AB5}, \cite{AB}]. Our method is to embed the mean field type control problem into a Hilbert space to bypass the evolution in the Wasserstein space. We here give a necessary condition and a sufficient condition for these control problems in Hilbert spaces, and we also derive a system of forward-backward stochastic differential equations. 
\end{abstract}

\keywords{Mean field type control problem; Optimality condition; Wasserstein space; Linear functional derivative; Subspace of $L^2$-random variables.}

\ccode{Subject Classification:  35R15, 49L25, 49N70, 91A13,
93E20, 60H30; 60H10.}

\section{Introduction}

Mean field type control problems and mean field games have received  much attention in recent years. The literature in this area is huge. We refer to some recent relevant literature such as Cardaliaguet-Delarue-Lasry-Lions [\cite{CDLL}], Carmona-Delarue [\cite{book_mfg}], Bensoussan-Frehse-Yam [\cite{AB_book}], Cosso-Pham [\cite{CA}], Pham-Wei [\cite{PH}], Buckdahn-Li-Peng-Rainer [\cite{BR}], Djete-Possamai-Tan [\cite{DMF}], Gomes-Pimentel-Voskanyan [\cite{GDA}], Huang-Malham\'e-Roland-Caines [\cite{HM}], Porretta [\cite{PA}], Carmona-Delarue [\cite{CR}], Gangbo-M\'esz\'aros-Mou-Zhang [\cite{GW}], Chassagneux-Crisan-Delarue [\cite{CJF}], Cardaliaguet-Cirant-Porretta [\cite{CP1}], Bensoussan-Yam [\cite{AB}], Bensoussan-Graber-Yam [\cite{AB7}], and Bensoussan-Tai-Yam [\cite{AB5}].

We here study the mean field type control problem with a general volatility function, following our previous work [\cite{AB6}, \cite{AB5}, \cite{AB}]. Our approach is to embed the mean field control type problem into one in Hilbert spaces, which looks like the lifting method proposed by P. L. Lions, but actually they are fundamentally different. This lifting method was used in Bensoussan-Yam [\cite{AB}] to study the control problem in the Wasserstein space. In [\cite{AB6}], we used a different Hilbert space and this allows to recover the mean field type control problem as a particular case. This kind of Hilbert space is already used in [\cite{AB5}], for the case when the  volatility is a matrix, and the cost functional is separable. 

This article is organized as follows. In Section~\ref{sec:2}, we introduce the Wasserstein space, the derivatives of functionals, the Hilbert space and the Wiener process. Section~\ref{sec:3} gives the formulation of our problem. In Section~\ref{sec:Hilbert}, we study control problems in Hilbert spaces and give a necessary condition in view of the maximum principle. In Section~\ref{sec:FB_Hilbert}, we give a sufficient condition by showing the cost functional is convex. We also derive a system of forward-backward stochastic differential equations (FBSDEs) defined in Hilbert spaces, the solution of which gives the optimal control. In Section~\ref{sec:MFC}, we go back to mean field type control problems and interpret our results for control problem in Hilbert spaces as those for mean field type control problems. 

\section{Problem Setting and Notations}\label{sec:2}

\subsection{Wasserstein space}

We consider the space $\mathcal{P}_{2}(\brn)$ of probability measures on $\brn$, each with a second moment, equipped with the 2-Wasserstein metric defined by
\begin{align*}
	W_2(m,m'):=\inf_{\pi\in\Gamma(m,m')}\sqrt{\int_{\brn\times\brn}|x-x'|^2\pi(dx,dx')},
\end{align*}
where $\Gamma(m,m')$ denotes the set of joint probability measures with respective marginals $m$ and $m'$. The infimum is attained, so we can find $\hat{X}_{m},\hat{X}_{m'}$ in $L^{2}(\Omega,\mathcal{A},\mathbb{P};\brn)$, where $(\Omega,\mathcal{A},\mathbb{P})$ is an atomless probability space, whose probability laws $\mathcal{L}X_{m}=m$, $\mathcal{L}X_{m'}=m'$, such that 
\begin{align*}
	W_{2}^{2}(m,m')=\e|\hat{X}_{m}-\hat{X}_{m'}|^{2}. 
\end{align*}
A family $m_{k}$ converges to $m$ in $\mathcal{P}_{2}(\brn)$ if and only if it converges in the sense of the weak convergence and $W_2(m_k,\delta_0)\to W_2(m,\delta_0)$ as $k\to\infty$, see Ambrosio-Gigli-Savar\'e [\cite{AL}] and Villani [\cite{VC}] for details.

\subsection{Derivatives of functionals}

We consider a functional $F:\mathcal{P}_{2}(\brn)\to\br$. For the concept of derivative, we use the concept of the linear functional derivative; see Carmona-Delarue [\cite{book_mfg}]. The linear functional derivative of $F(m)$ at $m$ is a function $\mathcal{P}_{2}(\brn)\times \brn\ni(m,x)\mapsto\dfrac{dF}{d\nu}(m)(x)$ being continuous under the product topology, satisfying 
\begin{align*}
	\int_{\brn}\left|\dfrac{dF}{d\nu}(m)(x)\right|^{2}dm(x)\leq c(m),
\end{align*}
for some positive constant $c(m)$ depending locally on $m$, and for any $m'\in\mathcal{P}_{2}(\brn)$,
\begin{align*}
	\lim_{\epsilon\to0}\dfrac{F(m+\epsilon(m'-m))-F(m)}{\epsilon}=\int_\brn\dfrac{dF}{d\nu}(m)(x)(dm'(x)-dm(x)).
\end{align*}
This definition implies that
\begin{align*}
	\dfrac{d}{d\theta}F(m+\theta(m'-m))=\int_{\brn}\dfrac{dF}{d\nu}(m+\theta(m'-m))(x)(dm'(x)-dm(x))
\end{align*}
and 
\begin{align*}
	F(m')-F(m)=\int_{0}^{1}\int_{\brn}\dfrac{dF}{d\nu}(m+\theta(m'-m))(x)(dm'(x)-dm(x))d\theta. 
\end{align*}
Here, we write $\dfrac{dF}{d\nu}(m)(x)$ instead of $\dfrac{dF}{dm}(m)(x)$ to make the difference between the notation $\nu$ and the argument $m.$ Also we prefer $\dfrac{dF}{d\nu}(m)(x)$ to $\dfrac{\delta F}{\delta m}(m)(x)$ used in Carmona-Delarue [\cite{book_mfg}], because there is no risk of confusion and it works pretty much like an ordinary G\^ateaux derivative in the space of measures with $L^2$-integrable densities. 

\subsection{Hilbert spaces}

In this paper, the control is $\brd$-valued, and the state is $\brn$-valued. For $m\in\pr_2(\brn)$, consider $L^2_m (\brn;\brn)$ the Hilbert space with respect to the following inner product
\begin{align*}
	(X,X'):=\int_{\brn} X_x^* X'_xdm(x),\quad X,X'\in L^2_m (\brn;\brn).
\end{align*}
We also work in Hilbert spaces $\hr_m:=L^2(\Omega,\mathcal{A},\mathbb{P};L_m^2(\brn;\brn))$ and $\ur_m:=L^2(\Omega,\mathcal{A},\mathbb{P};L_m^2(\brn;\brd))$. An element of $\hr_m$ is denoted by $X_x(\omega),\ (x,\omega)\in\brn\times\Omega$; for any $x\in\brn$, $X_x$ is a $\brn$-random vector in $L^2(\Omega,\mathcal{A},\mathbb{P};\brn)$. Similarly, an element of $\ur_m$ is denoted by $V_x(\omega),\ (x,\omega)\in\brn\times\Omega$; for any $x\in\brn$, $V_x$ is a $\brd$-random vector in $L^2(\Omega,\mathcal{A},\mathbb{P};\brd)$. The inner products over $\hr_m$ and $\ur_m$ are respectively defined as
\begin{align*}
	&(X,X')_{\hr_m}:=\e\left[\int_\brn X_x^* X'_x dm(x)\right],\quad X,X'\in\hr_m,\\
	&(V,V')_{\ur_m}:=\e\left[\int_\brn V_x^* V'_x dm(x)\right],\quad V,V'\in\ur_m.
\end{align*}
For $X\in\hr_m$, we denote by $X\#(m\otimes\mathbb{P})$ the push-forward of $m\otimes\mathbb{P}$ by the map $(x,\omega)\mapsto X_x(\omega)$. For test function $\varphi$, we have
\begin{align*}
	\int_\brn \varphi(y)d(X\#(m\otimes\mathbb{P}))(y)=\e \left[\int_\brn \varphi(X_x)dm(x)\right].
\end{align*}
We have $X\#(m\otimes\mathbb{P})\in\pr_2(\brn)$, and
\begin{equation}\label{m2}
	\begin{aligned}
		&W_2(X\#(m\otimes\mathbb{P}),\delta_0)=\|X\|_{\hr_m},\\
		&W_2(X\#(m\otimes\mathbb{P}),X'\#(m\otimes\mathbb{P}))\le \|X-X'\|_{\hr_m},\quad X,X'\in \hr_m.
	\end{aligned}	
\end{equation}
We refer to [\cite{AB6}, \cite{AB5}] for details. For notational convenience, we use the notation $X\otimes m$ instead of $X\#(m\otimes\mathbb{P})$.

\subsection{Wiener process and filtrations}

Let $w(t),\ t\geq 0$ be an $n$-dimensional standard Wiener process on $(\Omega,\mathcal{A},\mathbb{P})$. For any $0\le t\le s\le T$, we denote by $\mathcal{W}_t^s:=\sigma(w(\tau)-w(t);\tau\in[t,s])$ the filtration generated by the Wiener process, and denote by $\mathcal{W}_t:=\mathcal{W}_t^T$. For any $X\in\hr_m$ independent of $\mathcal{W}_t$, we denote by $\mathcal{W}^s_{tX}:=\sigma(X)\bigvee \mathcal{W}_t^s$ the filtration generated by $X$ and the Wiener process, and denote by $\mathcal{W}_{tX}$ the filtration generated by the $\sigma$-algebras $\mathcal{W}_{tX}^s$ for all $s\in[t,T]$. We denote by $L^2_{\mathcal{W}_{tX}(t,T;\hr_m)}$ the subspace of $L^2(t,T;\hr_m)$ of all processes adapted to the filtration $\mathcal{W}_{tX}$.

\section{Formulation of the Control Problem}\label{sec:3}

\subsection{Problem formulation}

We first introduce the dynamical system. We consider maps
\begin{align*}
	&\text{Drift:}\quad\qquad  f:\brn\times\mathcal{P}_{2}(\brn)\times \brd\times[0,T]\rightarrow \brn,\\
	&\text{Volatility:}\quad \sigma: \brn\times\mathcal{P}_{2}(\brn)\times \brd\times [0,T]\rightarrow \br^{n\times n}.
\end{align*}
For $(t,m)\in[0,T]\times\pr_2(\brn)$ and initial $X\in \hr_m$ independent of $\mathcal{W}_t$, the space of controls is set as the Hilbert space $L^2_{\mathcal{W}_{tX}}(t,T;\ur_m)$, and a control is denoted by $v_x(s)$. Then, the state process $X^v(\cdot)$ associated with $v(\cdot)$ is defined as
\begin{equation}\label{eq:3-21}
	\begin{aligned}
		X_x^v(s)=&X_x+\int_t^s f(X_x^v(r),X^v(r)\otimes m,v_x(r),r)dr\\
		&+\sum_{j=1}^{N} \int_t^s \sigma^j(X_x^v(r),X^v(r)\otimes m,v_x(r),r)dw_j(r),\quad s\in[t,T].
	\end{aligned}	
\end{equation}
The state process $X^v(\cdot)$ belongs to $L^2_{\mathcal{W}_{tX}}(t,T;\hr_m)$ under suitable assumptions on coefficients $g$ and $\sigma$ to be stated in Section~\ref{sec:Hilbert}. For $s\in[t,T]$, $X^v(s)\otimes m$ is the probability law of $X^v_\eta(s)$ when $\eta$ is equipped with a probability $m\otimes\mathbb{P}$. We then introduce the loss function. We consider the respective running and terminal loss coefficient functions
\begin{align*}
	&g:\brn\times\mathcal{P}_{2}(\brn)\times \brd\times [0,T]\rightarrow \br,\\
	&g_T:\brn\times\mathcal{P}_{2}(\brn)\rightarrow \br.
\end{align*}
We then define the loss functional as
\begin{equation}\label{eq:3-22}
	\begin{aligned}
		J_{tX}(v(\cdot)):=&\int_{t}^{T}\e\left[\int_{\brn}g(X^v_x(s),X^v(s)\otimes m,v_x(s),s)dm(x)\right]ds\\
		&+\e\left[\int_{\brn}g_T(X^v_x(T),X^v(T)\otimes m)dm(x)\right],\quad v(\cdot) \in L^2_{\mathcal{W}_{tX}}(t,T;\ur_m).
	\end{aligned}
\end{equation}
Then, our control problem is defined as
\begin{align*}
	\inf_{v \in L^2_{\mathcal{W}_{tX}}(t,T;\ur_m)}J_{tX}(v).
\end{align*}

\subsection{Deterministic control problem in Hilbert space}

In our previous work [\cite{AB6}, \cite{AB9}], we study the deterministic control problem. For $(t,m)\in [0,T]\times\pr_2(\br^n)$ and $X\in L^2_m(\brn;\brn)$, the McKean-Vlasov control problem is defined as:
\begin{equation}\label{intr_5}
	\begin{aligned}
		&\inf_{v\in L^2([t,T];L_m^2(\brn;\brd))} \ J_{tX}(v),\\
		&\text{where}\quad  X_{x}^{v}(s)={X}_x+\int_t^s f(X^{v}_{x}(r),X^{v}_{\cdot}(r)\# m,v_{x}(r),r)dr,\quad s\in[t,T],
	\end{aligned}
\end{equation}
where the loss functional is defined as
\begin{align*}
	J_{tX}(v(\cdot)):=&\int_t^T \int_{\br^n} g(X_{x}^{v}(s),X^{v}_{\cdot}(s)\# m,v_{x}(s),s)dm(x)ds\\
	&+\int_{\br^n} g_T(X_{x}^{v}(T),X^{v}_{\cdot}(T)\# m)dm(x),\quad v(\cdot)\in L^2([t,T];L_m^2(\brn;\brd)).
\end{align*}
To obtain the optimality condition, we introduce the Lagrangian $L:\brn\times\pr_2(\brn)\times\brd\times[0,T]\times\brn\to\br$ as
\begin{align*}
	L(x,m,v,s;p):=p^* f(x,m,v,s)+g(x,m,v,s).
\end{align*}
Let $\hat{v}_x(s)$ be an optimal control for \eqref{intr_5} and $\hx_x(s)$ be the corresponding state process. The adjoint process is defined by, for $s\in[t,T]$,
\begin{align*}
	\hp_x(s)=&D_xg_T\left(\hx_x(T),\hx_\cdot(T)\#m\right)+\int_\brn D_\xi\frac{d g_T}{d\nu}\left(\hx_y(T),\hx_\cdot(T)\#m\right)\left(\hx_x(T)\right) dm(y)\\
	&+\int_s^T \bigg[D_x {L}\left(\hx_x(r),\hx_\cdot(r)\#m,\hat{v}_x(r),r;\hp_x(r)\right)\\
	&\quad\qquad+\int_\brn D_\xi \frac{d L}{d\nu}\left(\hx_y(r),\hx_\cdot(r)\#m,\hat{v}_y(r),r;\hp_y(r)\right)\left(\hx_x(r)\right) dm(y)\bigg]dr.
\end{align*}
Then, the optimal control satisfies the optimality condition
\begin{align*}
	D_v L\left(\hx_x(s),\hx_\cdot(s)\#m,\hat{v}_x(s),s;\hp_x(s)\right)=0,\quad \text{a.e.}\ s\in[t,T],\  \text{a.s.}\ dm(x).
\end{align*}

\section{Necessary Condition for Control Problem \eqref{intr_3} on Hilbert Spaces}\label{sec:Hilbert}

The formulation of problem \eqref{eq:3-21}-\eqref{eq:3-22} inspires us to study control problems on Hilbert spaces $\hr_m$ and $\ur_m$. For any initial time $t\in[0,T]$ and $X\in\hr_m$ (being independent of $\mathcal{W}_t$), in this section, we consider the following control problem
\begin{equation}\label{intr_3}
	\begin{aligned}
		&\inf_{V\in L^2_{\mathcal{W}_{tX}}(t,T;\ur_m)} \ J_{tX}(V):=\int_t^T G\left(X^{V}(s),V(s),s\right) ds+G_T\left(X^{V}(T)\right),\\
		&\text{s.t.}\   X^{V}(s)=X+\int_t^s F\left(X^{V}(r),V(r),r\right)dr+\sum_{j=1}^{n} A^j\left(X^{V}(r),V(r),r\right)dw_j(r),
	\end{aligned}
\end{equation}
where 
\begin{align*}
	&F:\hr_m\times \ur_m\times[0,T]\to \hr_m,\quad A:\hr_m\times \ur_m\times[0,T]\to \hr_m^n,\\
	&G:\hr_m\times \ur_m\times[0,T]\to \br,\quad G_T:\hr_m\to \br.
\end{align*}
We shall make a connection between mean field type control problem \eqref{eq:3-21}-\eqref{eq:3-22} and the abstract control problem in (certain suitably chosen) Hilbert spaces \eqref{intr_3} in Section~\ref{sec:MFC}. We state our assumptions for necessary condition for \eqref{intr_3}. We shall make a connection between assumptions for mean field type control problem \eqref{eq:3-21}-\eqref{eq:3-22} and the following assumptions in Section~\ref{sec:MFC}. For notational convenience, we use the same constant $L>0$ for all the conditions below. \\
\textbf{(A1)} The maps $F$ and $A^j$ (for $1\le j\le n$) satisfy for $(X,V,s)\in \hr_m\times \ur_m\times[0,T]$,
\begin{align*}
	&\|F(X,V,s)\|_{\hr_m}\le L(1+\|X\|_{\hr_m}+\|V\|_{\ur_m}),\\
	&\|A^j(X,V,s)\|_{\hr_m}\le L(1+\|X\|_{\hr_m}+\|V\|_{\ur_m}).
\end{align*}    
For any $(X,V,s)$, the G\^ateaux derivatives of $F$ and $A^j$ along any directions $\tx\in \hr_m$ and $\tv\in \ur_m$ at $(X,V,s)$ exist and are continuous in $(X,V)$, and satisfy
\begin{align*}
	&\|D_X F(X,V,s)(\tx)\|_{\hr_m}\le L\|\tx\|_{\hr_m},\quad \|D_V F(X,V,s)(\tv)\|_{\hr_m}\le L\|\tv\|_{\ur_m},\\
	&\|D_X A^j(X,V,s)(\tx)\|_{\hr_m}\le L\|\tx\|_{\hr_m},\quad \|D_V A^j(X,V,s)(\tv)\|_{\hr_m}\le L\|\tv\|_{\ur_m}.
\end{align*}
\textbf{(A2)} The running cost functional $G$ satisfies
\begin{align*}
	|G(X,V,s)|\le L(1+\|X\|^2_{\hr_m}+\|V\|^2_{\ur_m}),\quad (X,V,s)\in \hr_m\times \ur_m\times[0,T].
\end{align*}
For any $s\in [0,T]$, the functional $\hr_m\times \ur_m\ni(X,V)\mapsto G(X,V,s)\in\br$ is continuously differentiable, with its derivatives
\begin{align*}
	\|D_X G (X,V,s)\|_{\hr_m}+\|D_V G (X,V,s)\|_{\ur_m} \le L(1+\|X\|_{\hr_m}+\|V\|_{\ur_m}).
\end{align*}
The terminal cost functional $G_T$ satisfies
\begin{align*}
	|G_T(X)|\le L(1+\|X\|^2_{\hr_m}),\quad X\in \hr_m,
\end{align*}   
and is continuously differentiable, with its derivative 
\begin{align*}
	\|D_XG_T(X)\|_{\hr_m} \le L(1+\|X\|_{\hr_m}).
\end{align*}
For any control $V\in L^2_{\mathcal{W}_{tX}}(t,T;\ur_m)$, we study the regularity in $V$ of the corresponding controlled state process $X^V(s)$. The following result is proven in \ref{pf:lem:05}.

\begin{lemma}\label{lem:05}
	Under Assumption (A1) and $V\in L^2_{\mathcal{W}_{tX}}(t,T;\ur_m)$, the corresponding controlled state process $X^V$ belongs to $L^2_{\mathcal{W}_{tX}}(t,T;\hr_m)$ and satisfies
	\begin{align}\label{lem02_1}
		\sup_{t\le s\le T}\left\|X^V(s)\right\|_{\hr_m}\le C(L,T)(1+\|X\|_{\hr_m}+\|V\|_{L^2(t,T;\ur_m)}),
	\end{align}
	where $C(L,T)$ is a constant depending only on $(L,T)$. For $\tv\in L^2_{\mathcal{W}_{tX}}(t,T;\ur_m)$, we set $V^\epsilon:=V+\epsilon \tv$ for $\epsilon\in[0,1]$ and denote by $X^\epsilon$ the state process corresponding to the control $V^\epsilon$. Then, we have
	\begin{align}\label{lem05_1}
		\lim_{\epsilon\to0}\sup_{t\le s\le T}\left\|\frac{1}{\epsilon}[X^\epsilon(s)-X^V(s)]-\mathcal{D}_{\tv}X^V(s)\right\|_{\hr_m}=0,
	\end{align}
	where $\tx^{V,\tv}\in L^2_{\mathcal{W}_{tX}}(t,T;\hr_m)$ the unique solution of the following equation
	\begin{equation}\label{tx}
		\begin{aligned}
			\mathcal{D}_{\tv}X^V(s)=&\int_t^s \Big[D_X F \left(X^V(r),V(r),r\right)\left(\mathcal{D}_{\tv}X^V(r)\right)\\
			&\quad\qquad +D_V F\left(X^V(r),V(r),r\right)\left(\tv(r)\right)\Big]dr\\
			&+\sum_{j=1}^n\int_t^s \Big[D_X A^j \left(X^V(r),V(r),r\right)\left(\mathcal{D}_{\tv}X^V(r)\right)\\
			&\quad\qquad\qquad +D_V A^j\left(X^V(r),V(r),r\right)\left(\tv(r)\right)\Big]dw_j(r),\quad s\in[t,T],
		\end{aligned}
	\end{equation}
	such that
	\begin{align}\label{lem04_1}
		\sup_{t\le s\le T}\left\|\mathcal{D}_{\tv}X^V(s)\right\|_{\hr_m}\le C(L,T)\left\|\tv\right\|_{L^2(t,T;\ur_m)}.
	\end{align}
	That is, $\mathcal{D}_{\tv}X^V(s)$ is actually the directional derivative ${D}_{\tv}X^V(s)$ of $X^V(s)$ along the direction $\tv\in L^2(t,T;\ur_m)$.
\end{lemma}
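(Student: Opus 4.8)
The plan is to regard \eqref{intr_3} as an It\^o equation valued in the Hilbert space $\hr_m$ whose coefficients, by Assumption~(A1), are of linear growth and globally Lipschitz: writing $F(X,V,s)-F(X',V',s)=\int_0^1\big[D_XF(\cdots)(X-X')+D_VF(\cdots)(V-V')\big]d\theta$ along the segment joining $(X',V')$ to $(X,V)$ and invoking the operator bounds $\le L$ from (A1) yields $\|F(X,V,s)-F(X',V',s)\|_{\hr_m}\le L(\|X-X'\|_{\hr_m}+\|V-V'\|_{\ur_m})$, and similarly for each $A^j$. I would then run the Picard iteration $X^{(0)}\equiv X$,
\begin{align*}
X^{(k+1)}(s)=X+\int_t^s F\big(X^{(k)}(r),V(r),r\big)\,dr+\sum_{j=1}^n\int_t^s A^j\big(X^{(k)}(r),V(r),r\big)\,dw_j(r),
\end{align*}
inside $L^2_{\mathcal{W}_{tX}}(t,T;\hr_m)$; adaptedness to $\mathcal{W}_{tX}$ is preserved at each step since $X$ is $\sigma(X)$-measurable, $V$ is $\mathcal{W}_{tX}$-adapted, and It\^o integrals of adapted integrands stay adapted. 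Applying It\^o's formula to $|X^{(k)}_x(s)|^2$, integrating in $dm(x)$ and taking expectations so the martingale term drops, then using the linear growth of $F,A^j$ and Gronwall's lemma (with Doob's/the Burkholder--Davis--Gundy inequality for path continuity) gives the uniform bound \eqref{lem02_1}; the Lipschitz estimate makes the map a contraction on a short time interval, and patching over $[t,T]$ produces the unique solution $X^V\in L^2_{\mathcal{W}_{tX}}(t,T;\hr_m)$.

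Next I would treat the linearized equation \eqref{tx}, which is a \emph{linear} It\^o equation in $\hr_m$: its homogeneous coefficients $D_XF(X^V(r),V(r),r)$, $D_XA^j(X^V(r),V(r),r)$ are bounded operators (norm $\le L$ by (A1)), and its inhomogeneous terms $D_VF(X^V(r),V(r),r)(\tv(r))$, $D_VA^j(X^V(r),V(r),r)(\tv(r))$ lie in $L^2_{\mathcal{W}_{tX}}(t,T;\hr_m)$ with norm $\le L\|\tv\|_{L^2(t,T;\ur_m)}$. The same fixed-point/Gronwall argument produces the unique solution $\mathcal{D}_{\tv}X^V\in L^2_{\mathcal{W}_{tX}}(t,T;\hr_m)$, and \eqref{lem04_1} follows from It\^o's formula and Gronwall.

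For the directional differentiability \eqref{lem05_1}, I would subtract the equations for $X^\epsilon$ and $X^V$ and apply the fundamental theorem of calculus: for a.e.\ $r$,
\begin{align*}
&F\big(X^\epsilon(r),V^\epsilon(r),r\big)-F\big(X^V(r),V(r),r\big)\\
&\qquad=\int_0^1\big[D_XF\big(X^{\theta,\epsilon}(r),V^{\theta,\epsilon}(r),r\big)\big(X^\epsilon(r)-X^V(r)\big)+\epsilon D_VF\big(X^{\theta,\epsilon}(r),V^{\theta,\epsilon}(r),r\big)\big(\tv(r)\big)\big]\,d\theta,
\end{align*}
where $X^{\theta,\epsilon}(r):=X^V(r)+\theta(X^\epsilon(r)-X^V(r))$ and $V^{\theta,\epsilon}(r):=V(r)+\theta\epsilon\tv(r)$, with the analogous formula for each $A^j$. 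A first Gronwall estimate (Lipschitz bounds plus the $\epsilon\tv$ forcing) gives $\sup_{t\le s\le T}\|X^\epsilon(s)-X^V(s)\|_{\hr_m}\le C(L,T)\,\epsilon\,\|\tv\|_{L^2(t,T;\ur_m)}$, hence $X^{\theta,\epsilon}(r)\to X^V(r)$ in $\hr_m$ and $V^{\theta,\epsilon}(r)\to V(r)$ in $\ur_m$, uniformly in $(r,\theta)$. Setting $Z^\epsilon(s):=\tfrac1\epsilon[X^\epsilon(s)-X^V(s)]-\mathcal{D}_{\tv}X^V(s)$ and subtracting \eqref{tx}, the process $Z^\epsilon$ solves a linear It\^o equation with homogeneous part $\int_0^1 D_XF(X^{\theta,\epsilon}(r),V^{\theta,\epsilon}(r),r)(Z^\epsilon(r))\,d\theta$ (plus the $A^j$-analogues, all of operator norm $\le L$) and forcing term
\begin{align*}
R^\epsilon(r):=&\int_0^1\big(D_XF(X^{\theta,\epsilon}(r),V^{\theta,\epsilon}(r),r)-D_XF(X^V(r),V(r),r)\big)\big(\mathcal{D}_{\tv}X^V(r)\big)\,d\theta\\
&+\int_0^1\big(D_VF(X^{\theta,\epsilon}(r),V^{\theta,\epsilon}(r),r)-D_VF(X^V(r),V(r),r)\big)\big(\tv(r)\big)\,d\theta
\end{align*}
together with its $A^j$-counterparts. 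By the joint continuity in $(X,V)$ of the G\^ateaux derivatives assumed in (A1) and the uniform convergence just established, the integrand defining $R^\epsilon(r)$ tends to $0$ in $\hr_m$ for a.e.\ $r$, while it is dominated by $2L(\|\mathcal{D}_{\tv}X^V(r)\|_{\hr_m}+\|\tv(r)\|_{\ur_m})\in L^2(t,T)$; dominated convergence then gives $\int_t^T\|R^\epsilon(r)\|_{\hr_m}^2\,dr\to0$. Finally, It\^o's formula for $|Z^\epsilon_x(s)|^2$ together with Gronwall's lemma yields $\sup_{t\le s\le T}\|Z^\epsilon(s)\|_{\hr_m}^2\le C(L,T)\int_t^T\|R^\epsilon(r)\|_{\hr_m}^2\,dr\to0$, i.e.\ \eqref{lem05_1}; in particular $\mathcal{D}_{\tv}X^V$ is the directional derivative $D_{\tv}X^V$.

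The routine part is the Picard/Gronwall machinery, now carried out in $\hr_m$ rather than in $\brn$. The one genuinely delicate step is the passage to the limit in the forcing term $R^\epsilon$: since (A1) grants only \emph{continuity} (not uniform continuity) of the G\^ateaux derivatives in $(X,V)$, one must first secure the strong convergence $X^\epsilon\to X^V$ in $C([t,T];\hr_m)$ and then appeal to dominated convergence in the variables $(\omega,x,r,\theta)$ to obtain $\int_t^T\|R^\epsilon(r)\|_{\hr_m}^2\,dr\to0$; once that is in hand, a single Gronwall argument closes the proof.
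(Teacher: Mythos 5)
Your proposal is correct and follows essentially the same route as the paper: an energy (It\^o plus Gr\"onwall) estimate for \eqref{lem02_1} and \eqref{lem04_1}, a first Gr\"onwall bound on $\frac{1}{\epsilon}[X^\epsilon-X^V]$, and then a Gr\"onwall estimate for the difference with $\mathcal{D}_{\tv}X^V$ whose forcing term is sent to zero by the continuity of the G\^ateaux derivatives in (A1) together with dominated convergence. The only cosmetic differences are that you spell out the Picard iteration for well-posedness (which the paper takes as standard) and use a joint segment in $(X,V)$ in the fundamental-theorem-of-calculus step, whereas the paper varies the $X$- and $V$-arguments separately; both are covered by (A1).
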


Lemma \ref{lem:05} shows that the mapping
\begin{align*}
	L^2(t,T;\ur_m)\ni V\mapsto X^{V}(s)\in \hr_m
\end{align*}
is G\^ateaux differentiable. The directional derivative $D_{\tv}X^V$ evaluated at $V$ can be expressed as a Fr\'echet derivative $D_V X^V$ acting on $\tv$, i.e. $D_V X^V(\tv)$, see [\cite{AB5}, \cite{AB9}]. Here, the subscript $\tv$ in the directional derivative $D_{\tv}X^V$ means the direction while that $V$ in $D_V X^V(\tv)$ means the evaluating point in the Fr\'echet derivative, we wish this may not cause too much ambiguity. We introduce the Lagrangian $\lr:\hr_m\times \ur_m\times[0,T]\times\hr_m\times\hr_m^n\to\br$ such that for $(X,V,s,P,Q)\in \hr_m\times \ur_m\times[0,T]\times \hr_m\times\hr_m^n$,
\begin{equation}\label{H}
	\begin{aligned}
		\lr(X,V,s;P,Q):=\left(P,F(X,V,s)\right)_{\hr_m}+\sum_{j=1}^n \left(Q^j,A^j(X,V,s)\right)_{\hr_m} +G(X,V,s).
	\end{aligned}
\end{equation}
Then, we have
\begin{align*}
	D_P\lr (X,V,s;P,Q)=F(X,V,s),\quad D_{Q^j}\lr (X,V,s;P,Q)=A^j(X,V,s).
\end{align*}
From Assumptions (A1) and (A2), we know that the functional $\lr$ satisfies
\begin{align}\label{lem3_1}
	|\lr(X,V,s;P,Q)|\le C(L)\left(1+\|X\|^2_{\hr_m}+\|V\|^2_{\ur_m}+\|P\|^2_{\hr_m}+\sum_{j=1}^n\left\|Q^j\right\|^2_{\hr_m}\right),
\end{align} 
and for any $(s,P,Q)$, the functional $(X,V)\mapsto \lr(X,V,s;P,Q)\in\br$ is continuously differentiable, with the derivatives
\begin{equation}\label{lem3_4}
	\begin{aligned}
		&\|D_X\lr (X,V,s;P,Q)\|_{\hr_m}+\|D_V\lr(X,V,s;P,Q)\|_{\ur_m}\\
		& \le C(L)\left(1+\|X\|_{\hr_m}+\|V\|_{\ur_m}+\|P\|_{\hr_m}+\sum_{j=1}^n\left\|Q^j\right\|_{\hr_m}\right).
	\end{aligned}
\end{equation}
Let $\hv$ be an optimal control for problem \eqref{intr_3} and $\hx$ be the corresponding state process. We define the adjoint process as the unique solution of the following backward stochastic differential equation (BSDE)
\begin{equation}\label{p}
	\begin{aligned}
		\hat{P}(s)=&D_X G_T\left(\hx(T)\right)+\int_s^T D_X\lr \left(\hx(r),\hv(r),r;\hat{P}(r),\hat{Q}(r)\right)dr\\
		&-\sum_{j=1}^n\int_s^T \hat{Q}^j(r)dw_j(r),\quad s\in[t,T].
	\end{aligned}
\end{equation}
For the solvability of BSDEs in Hilbert spaces, we refer to Remark 5.53 in [\cite{Pardoux}]. From Assumptions (A1) and (A2) and Lemma~\ref{lem:05}, there is a unique solution $\left(\hat{P},\hat{Q}\right)\in L^2_{\mathcal{W}_{tX}}(t,T;\hr_m)\times \left(L^2_{\mathcal{W}_{tX}}(t,T;\hr_m)\right)^n$ of BSDE \eqref{p} satisfying 
\begin{equation}\label{lem06_1}
	\begin{aligned}
		&\sup_{t\le s\le T}\left\|\hat{P}(s)\right\|_{\hr_m}+\sum_{j=1}^n\left\|\hat{Q}^j\right\|_{L^2(t,T;\hr_m)}\\
		\le& C(L,T)(1+ \|X\|_{\hr_m}+\left\|\hv\right\|_{L^2(t,T;\ur_m)}).
	\end{aligned}
\end{equation}
Now we give the necessary condition of control problem \eqref{intr_3}.

\begin{theorem}\label{thm}
	Under Assumptions (A1) and (A2), let $\hv$ be an optimal control for Problem \eqref{intr_3}, $\hx$ be the corresponding controlled state process, and $(\hat{P},\hat{Q})$ be the solution of Equation \eqref{p}. Then, we have the optimality condition
	\begin{align*}
		D_V \lr\left(\hx(s),\hv(s),s;\hat{P}(s),\hat{Q}(s)\right)\overset{\ur_m}{=}0,\quad \text{a.e.}\ s\in[t,T].
	\end{align*}
\end{theorem}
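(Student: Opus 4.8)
The plan is to carry out a convex (G\^ateaux) perturbation of the optimal control and then exploit the duality identity between the variational equation \eqref{tx} and the adjoint BSDE \eqref{p}. Fix an arbitrary direction $\tv\in L^2_{\mathcal{W}_{tX}}(t,T;\ur_m)$; since the control space carries no constraints, $V^\epsilon:=\hv+\epsilon\tv$ is admissible for $\epsilon\ge0$ small, and optimality of $\hv$ gives $J_{tX}(V^\epsilon)\ge J_{tX}(\hv)$. By Lemma~\ref{lem:05}, $\frac{1}{\epsilon}[X^\epsilon(s)-\hx(s)]\to\mathcal{D}_{\tv}X^{\hv}(s)$ in $\hr_m$ uniformly in $s\in[t,T]$, where $\mathcal{D}_{\tv}X^{\hv}$ solves \eqref{tx}. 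Combining this with the $C^1$-regularity and the linear/quadratic growth of $D_XG,D_VG,D_XG_T$ from Assumption (A2), the a priori bounds \eqref{lem02_1} and \eqref{lem04_1}, and the mean value theorem, dominated convergence shows $\epsilon\mapsto J_{tX}(V^\epsilon)$ is right-differentiable at $0$ with
\begin{equation}\label{eq:Jprime}
	\begin{aligned}
		0\le\frac{d}{d\epsilon}\bigg|_{\epsilon=0^+}J_{tX}(V^\epsilon)=&\int_t^T\Big[\big(D_XG(\hx(s),\hv(s),s),\mathcal{D}_{\tv}X^{\hv}(s)\big)_{\hr_m}+\big(D_VG(\hx(s),\hv(s),s),\tv(s)\big)_{\ur_m}\Big]ds\\
		&+\big(D_XG_T(\hx(T)),\mathcal{D}_{\tv}X^{\hv}(T)\big)_{\hr_m}.
	\end{aligned}
\end{equation}

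To remove the $\mathcal{D}_{\tv}X^{\hv}$-terms I would apply the It\^o product rule (in the Hilbert space $L^2_m(\brn;\brn)$, followed by taking expectations so the $dw_j$-stochastic integrals drop out) to $s\mapsto(\hat{P}(s),\mathcal{D}_{\tv}X^{\hv}(s))_{\hr_m}$ on $[t,T]$, using \eqref{tx} for $d\mathcal{D}_{\tv}X^{\hv}$, \eqref{p} for $d\hat{P}$, and $\mathcal{D}_{\tv}X^{\hv}(t)=0$. The drift--drift terms together with the quadratic-covariation terms $\sum_j\big(\hat{Q}^j(s),D_XA^j(\hx(s),\hv(s),s)(\mathcal{D}_{\tv}X^{\hv}(s))+D_VA^j(\hx(s),\hv(s),s)(\tv(s))\big)_{\hr_m}$ then reorganise --- using that differentiating the definition \eqref{H} builds $D_X\lr$ out of $D_XF,D_XA^j,D_XG$ and $D_V\lr$ out of $D_VF,D_VA^j,D_VG$ --- into contributions involving only $D_X\lr$ and $D_V\lr$, which yields
\begin{equation}\label{eq:duality}
	\begin{aligned}
		\big(D_XG_T(\hx(T)),\mathcal{D}_{\tv}X^{\hv}(T)\big)_{\hr_m}=&\int_t^T\Big[-\big(D_XG(\hx(s),\hv(s),s),\mathcal{D}_{\tv}X^{\hv}(s)\big)_{\hr_m}\\
		&\ +\big(D_V\lr(\hx(s),\hv(s),s;\hat{P}(s),\hat{Q}(s))-D_VG(\hx(s),\hv(s),s),\tv(s)\big)_{\ur_m}\Big]ds.
	\end{aligned}
\end{equation}
The a priori estimates \eqref{lem02_1}, \eqref{lem04_1}, \eqref{lem06_1} supply exactly the integrability needed to justify the product rule and the vanishing of the martingale parts. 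Substituting \eqref{eq:duality} into \eqref{eq:Jprime} cancels all $\mathcal{D}_{\tv}X^{\hv}$-terms and leaves
\begin{equation}\label{eq:final}
	\int_t^T\big(D_V\lr(\hx(s),\hv(s),s;\hat{P}(s),\hat{Q}(s)),\tv(s)\big)_{\ur_m}ds\ \ge\ 0.
\end{equation}

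Since $L^2_{\mathcal{W}_{tX}}(t,T;\ur_m)$ is a linear space without constraints, replacing $\tv$ by $-\tv$ (and using linearity of \eqref{tx}) upgrades \eqref{eq:final} to the equality $\int_t^T(D_V\lr(\hx(s),\hv(s),s;\hat{P}(s),\hat{Q}(s)),\tv(s))_{\ur_m}ds=0$ for every admissible $\tv$. By \eqref{lem3_4} together with \eqref{lem02_1} and \eqref{lem06_1}, the process $s\mapsto D_V\lr(\hx(s),\hv(s),s;\hat{P}(s),\hat{Q}(s))$ is $\mathcal{W}_{tX}$-adapted and square-integrable, hence itself belongs to $L^2_{\mathcal{W}_{tX}}(t,T;\ur_m)$; choosing $\tv$ equal to this process gives $\int_t^T\|D_V\lr(\hx(s),\hv(s),s;\hat{P}(s),\hat{Q}(s))\|^2_{\ur_m}ds=0$, i.e. $D_V\lr(\hx(s),\hv(s),s;\hat{P}(s),\hat{Q}(s))\overset{\ur_m}{=}0$ for a.e. $s\in[t,T]$, as claimed.

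I expect the main obstacle to be the rigorous justification of the infinite-dimensional It\^o product formula behind \eqref{eq:duality}: one has to verify the integrability under which the product rule applies and the $dw_j$-stochastic integrals are mean-zero (this follows from \eqref{lem02_1}, \eqref{lem04_1}, \eqref{lem06_1}), and to keep careful track of the quadratic-covariation terms generated by the common noise appearing both in $\sum_j\hat{Q}^j\,dw_j$ and in $\sum_j\big(D_XA^j(\cdot)(\mathcal{D}_{\tv}X^{\hv})+D_VA^j(\cdot)(\tv)\big)\,dw_j$; indeed the adjoint equation \eqref{p} is set up precisely so that its drift $D_X\lr$ absorbs both the variational drift and these cross terms. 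The dominated-convergence step behind \eqref{eq:Jprime} is comparatively routine, relying on \eqref{lem05_1} and the quadratic growth in (A2).
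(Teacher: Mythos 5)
Your proposal is correct and follows essentially the same route as the paper: the convex perturbation $V^\epsilon=\hv+\epsilon\tv$, Lemma~\ref{lem:05} plus dominated convergence to obtain \eqref{thm_3}, the duality between \eqref{tx} and \eqref{p} (the paper's computation of $\frac{d}{ds}(\hat P(s),D_{\tv}X^{\hv}(s))_{\hr_m}$, where the $\hat Q^j$ covariation terms are absorbed exactly as you describe) leading to \eqref{thm_5}--\eqref{thm_6}, and then the unconstrainedness of the control space to conclude. Your final two steps (replacing $\tv$ by $-\tv$ and testing with $\tv=D_V\lr$) merely spell out what the paper leaves implicit in ``the necessary condition then follows.''
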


\begin{proof}
	For any $\tv\in L^2_{\mathcal{W}_{tX}}(t,T;\ur_m)$, we set $V^\epsilon:=\hv+\epsilon \tv$ for $\epsilon\in[0,1]$. It is obvious that $V^\epsilon\in L^2_{\mathcal{W}_{tX}}(t,T;\ur_m)$. We denote by $X^\epsilon$ the state process corresponding to the control $V^\epsilon$, and set $Y^\epsilon:=\frac{1}{\epsilon}\left(X^\epsilon-\hx\right)$. From Assumption (A1) and (A2), we have	
	\begin{equation}\label{thm_2}
		\begin{aligned}
			&\frac{1}{\epsilon}\left[J_{tX}(V^\epsilon)-J_{tX}(\hv)\right]\\
			=&\int_t^T \int_0^1 \Big[ \left(D_X G\left(\hx(s)+\lambda\epsilon Y^\epsilon(s),V^\epsilon(s),s\right),Y^\epsilon(s)\right)_{\hr_m} \\
			&\qquad\qquad + \Big(D_V G\left(\hx(s),\hv(s)+\lambda\epsilon\tv(s),s\right),\tv(s)\Big)_{\ur_m} \Big] d\lambda ds\\
			&+\int_0^1 \left( D_X G_T\left(\hx(T)+\lambda\epsilon Y^\epsilon(T)\right),Y^\epsilon(T)\right)_{\hr_m} d\lambda.
		\end{aligned}
	\end{equation}
	Let $D_{\tv}X^{\hv}$ be the solution of equation \eqref{tx} corresponding to $\left(\hv,\tv\right)$. From Assumption (A2), Lemma \ref{lem:05}, and the dominated convergence theorem, we have
	\begin{equation}\label{thm_3}
		\begin{aligned}
			&\lim_{\epsilon\to0}\left[\frac{1}{\epsilon}\left[J_{tX}(V^\epsilon)-J_{tX}(\hv)\right]\right]\\
			=&\int_t^T\Big[\left(D_X G\left(\hx(s),\hv(s),s\right),D_{\tv}X^{\hv}(s)\right)_{\hr_m}\\
			&\quad\qquad+\left(D_V G\left(\hx(s),\hv(s),s\right),\tv(s)\right)_{\ur_m}\Big] ds\\
			&+\left(D_X G_T\left(\hx(T)\right),D_{\tv}X^{\hv}(T)\right)_{\hr_m}.
		\end{aligned}
	\end{equation}
	From \eqref{tx}, \eqref{p}, Assumption (A1) and the definition of the Lagrangian functional $\lr$, we have for $s\in(t,T)$,
	\begin{align*}
		&\frac{d}{ds}\left( \hat{P}(s),D_{\tv}X^{\hv}(s)\right)_{\hr_m} \\
		=&\left(\hat{P}(s),D_X F\left(\hx(s),\hv(s),s\right)\left(D_{\tv}X^{\hv}(s)\right)\right)_{\hr_m}\\
		&+ \left(\hat{P}(s),D_V F\left(\hx(s),\hv(s),s\right)\left(\tv(s)\right)\right)_{\hr_m}\\
		&+\sum_{j=1}^n\bigg[\left(\hat{Q}^j(s),D_X A^j\left(\hx(s),\hv(s),s\right)\left(D_{\tv}X^{\hv}(s)\right)\right)_{\hr_m}\\
		&\qquad\qquad + \left(\hat{Q}^j(s),D_V A^j\left(\hx(s),\hv(s),s\right)\left(\tv(s)\right)\right)_{\hr_m} \bigg]\\
		&-\left(D_X\lr \left(\hx(s),\hv(s),s;\hat{P}(s),\hat{Q}(s)\right),D_{\tv}X^{\hv}(s)\right)_{\hr_m}  \\
		=&\left(\hat{P}(s),D_V F\left(\hx(s),\hv(s),s\right)\left(\tv(s)\right)\right)_{\hr_m}\\
		&+\sum_{j=1}^n \left(\hat{Q}^j(s),D_V A^j\left(\hx(s),\hv(s),s\right)\left(\tv(s)\right)\right)_{\hr_m}\\
		&-\left(D_X G\left(\hx(s),\hv(s),s\right),D_{\tv}X^{\hv}(s)\right)_{\hr_m}.
	\end{align*}
	We integrate against $s$ from $t$ to $T$ so as to obtain
	\begin{equation}\label{thm_5}
		\begin{aligned}
			&\left(D_X G_T\left(\hx(T)\right),D_{\tv}X^{\hv}(T)\right)_{\hr_m}\\
			&+\int_t^T  \left(D_X G\left(\hx(s),\hv(s),s\right),D_{\tv}X^{\hv}(s)\right)_{\hr_m} ds\\
			=&\int_t^T \bigg[\left(\hat{P}(s),D_V F\left(\hx(s),\hv(s),s\right)\left(\tv(s)\right)\right)_{\hr_m}\\
			&\quad\qquad +\sum_{j=1}^n \left(\hat{Q}^j(s),D_V A^j\left(\hx(s),\hv(s),s\right)\left(\tv(s)\right)\right)_{\hr_m} \bigg] ds.
		\end{aligned}
	\end{equation}
	Plugging \eqref{thm_5} into \eqref{thm_3}, from Assumption (A1), we have
	\begin{equation}\label{thm_6}
		\begin{aligned}
			&\lim_{\epsilon\to0}\left[\frac{1}{\epsilon}\left[J_{tX}(V^\epsilon)-J_{tX}(\hv)\right]\right]\\
			=&\int_t^T\bigg[\left(\hat{P}(s),D_V F\left(\hx(s),\hv(s),s\right)\left(\tv(s)\right)\right)_{\hr_m}\\
			&\quad\qquad +\sum_{j=1}^n \left(\hat{Q}^j(s),D_V A^j\left(\hx(s),\hv(s),s\right)\left(\tv(s)\right)\right)_{\hr_m}\\
			&\quad\qquad +\left(D_V G\left(\hx(s),\hv(s),s\right),\tv(s)\right)_{\ur_m}\bigg]ds\\
			=&\int_t^T \left(D_V\lr\left(\hx(s),\hv(s),s;\hat{P}(s),\hat{Q}(s)\right),\tv(s)\right)_{\ur_m} ds.
		\end{aligned}
	\end{equation}
	The necessary condition then follows.   
\end{proof}

\section{Sufficient Condition for Control Problem \eqref{intr_3} on Hilbert Spaces}\label{sec:FB_Hilbert}

In this section, we give the sufficient condition of the control problem \eqref{intr_3}. We need the following assumptions, which are the regularity-enriched version of Assumptions (A1) and (A2).\\
\textbf{(A1')} (i) The maps $F$ and $A^j$ (for $1\le j\le n$) are linear in $V$. That is,
\begin{align*}
	&F(X,V,s)=F_1(X,s)+\mathcal{F}_2(s)V,\\
	&A^j(X,V,s)=A^j_1(X,s)+\mathcal{A}^j_2(s)V,
\end{align*}
where $F_1$ and $A^j_1$ also satisfy Assumption (A1), and $\mathcal{F}_2(s)$ and $\mathcal{A}^j_2(s)$ are linear maps on $\ur_m$, and
\begin{align*}
	&\|\mathcal{F}_2(s)\|_{L(\ur_m;\hr_m)}\le L, \quad \|\mathcal{A}^j_2(s)\|_{L(\ur_m;\hr_m)}\le L.
\end{align*}
(ii) Furthermore, the maps $F_1$ and $A^j_1$ (for $1\le j\le n$) are linear in $X$. That is,
\begin{align*}
	&F_1(X,s)=F_0(s)+\mathcal{F}_1(s)X,\\
	&A^j_1(X,s)=A^j_0(s)+\mathcal{A}^j_1(s)X,\quad 1\le j\le n,
\end{align*}
where $\mathcal{F}_1(s)$ and $\mathcal{A}^j_1(s)$ are linear maps on $\hr_m$, and
\begin{align*}
	&\|F_0(s)\|_{\hr_m}\le L,\quad \|\mathcal{F}_1(s)\|_{L(\hr_m;\hr_m)}\le L,\\ 
	&\|A^j_0(s)\|_{\hr_m}\le L,\quad \|\mathcal{A}^j_1(s)\|_{L(\hr_m;\hr_m)}\le L.
\end{align*}
\textbf{(A2')} The functionals $G$ and $G_T$ satisfy (A2). The maps $D_X G$ and $D_V G$ are $L$-Lipschitz continuous in $(X,V)\in\hr_m\times\ur_m$; the map $D_X G_T$ is $L$-Lipschitz continuous in $X\in\hr_m$.\\
We also need the following convexity assumption.\\
\textbf{(A3)} (i) There exists $\lambda>0$ such that, for any $X\in \hr_m$, $V,V'\in \ur_m$ and $s\in[0,T]$,
\begin{align*}
	&G(X,V',s)-G(X,V,s) \geq  \left(D_V G(X,V,s),V'-V\right)_{\ur_m}+\lambda\|V'-V\|^2_{\ur_m}.
\end{align*}		
(ii) There exists $\lambda> 0$ such that, for any $X,X'\in \hr_m$, $V,V'\in \ur_m$ and $s\in[0,T]$,
\begin{align*}
	&G(X',V',s)-G(X,V,s) \geq  \left(D_X G(X,V,s),X'-X\right)_{\hr_m}+\left(D_V G(X,V,s),V'-V\right)_{\ur_m}\\
	&\qquad\qquad\qquad\qquad\qquad\qquad +\lambda\|V'-V\|^2_{\ur_m},\\
	&\left(D_X G_T(X')-D_X G_T(X),X'-X\right)_{\hr_m}\geq 0.
\end{align*}
We refer to Remark 3.1 in [\cite{AB5}] for the relation between the aforementioned convexity assumption and the displacement monotonicity condition and Lasry-Lions monotonicity condition. We now give a sufficient condition for the control problem \eqref{intr_3}.

\begin{theorem}\label{thm6}
	Under Assumptions (A1')-(i), (A2') and (A3)-(i), there exists a constant $C(L,T)$ depending only on $(L,T)$, such that when $\lambda\geq C(L,T)$, Problem \eqref{intr_3} has a unique optimal control. Furthermore, if Assumptions (A1')-(ii) and (A3)-(ii) are satisfied, it is sufficient to have the well-posedness for any $\lambda> 0$.
\end{theorem}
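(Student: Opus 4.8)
The plan is to show that the cost functional $V\mapsto J_{tX}(V)$ is strongly convex on $L^2_{\mathcal{W}_{tX}}(t,T;\ur_m)$ once $\lambda$ is large enough, and then invoke the standard fact that a strongly convex, continuous, coercive functional on a Hilbert space attains its infimum at a unique point. Combined with Theorem~\ref{thm}, the minimizer is characterized by the stated optimality condition, but uniqueness and existence are what we are really after here. First I would fix two controls $V, V' \in L^2_{\mathcal{W}_{tX}}(t,T;\ur_m)$, write $\tv := V'-V$, and under (A1')-(i) observe that the state equation in \eqref{intr_3} is affine in $V$: the solution map $V\mapsto X^V$ is affine, so $X^{V'} - X^V = D_{\tv}X^V$ exactly (no $o(\epsilon)$ error), where $D_{\tv}X^V$ solves \eqref{tx} with $D_X F$, $D_V F$, $D_X A^j$, $D_V A^j$ replaced by the constant linear maps coming from $F_1, \mathcal{F}_2, A^j_1, \mathcal{A}^j_2$. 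In particular, for a fixed $V$, the map $\tv \mapsto D_{\tv}X^V$ is linear and, by \eqref{lem04_1}, bounded: $\sup_s\|D_{\tv}X^V(s)\|_{\hr_m}\le C(L,T)\|\tv\|_{L^2(t,T;\ur_m)}$.

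Next I would expand $J_{tX}(V') - J_{tX}(V)$ using a second-order Taylor-type estimate built from (A2')–(A3)-(i). Writing $X' := X^{V'}$, $X := X^V$, I split the difference of running costs as
\begin{align*}
G(X'(s),V'(s),s) - G(X(s),V(s),s) &= \big[G(X'(s),V'(s),s) - G(X(s),V'(s),s)\big] \\
&\quad + \big[G(X(s),V'(s),s) - G(X(s),V(s),s)\big],
\end{align*}
bound the first bracket below by $(D_X G(X(s),V(s),s), X'(s)-X(s))_{\hr_m}$ up to a Lipschitz error of order $\|X'(s)-X(s)\|_{\hr_m}(\|X'(s)-X(s)\|_{\hr_m} + \|\tv(s)\|_{\ur_m})$ (using the $L$-Lipschitz continuity of $D_X G$ and of $D_V G$ in (A2')), and bound the second bracket below by $(D_V G(X(s),V(s),s), \tv(s))_{\ur_m} + \lambda\|\tv(s)\|^2_{\ur_m}$ directly from (A3)-(i). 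A similar Lipschitz bound handles $G_T(X'(T)) - G_T(X(T)) \ge (D_X G_T(X(T)), X'(T)-X(T))_{\hr_m} - \tfrac{L}{2}\|X'(T)-X(T)\|^2_{\hr_m}$. The linear-in-difference terms collected this way are exactly $\big(\text{grad}\,J_{tX}(V),\tv\big)$: indeed, from the computation in the proof of Theorem~\ref{thm} (which goes through verbatim with $D_{\tv}X^V$ replacing $D_{\tv}X^{\hv}$), one has
\begin{align*}
&\big(D_X G_T(X(T)), D_{\tv}X^V(T)\big)_{\hr_m} + \int_t^T \big(D_X G(X(s),V(s),s), D_{\tv}X^V(s)\big)_{\hr_m}\,ds \\
&\quad + \int_t^T \big(D_V G(X(s),V(s),s),\tv(s)\big)_{\ur_m}\,ds = \int_t^T \big(D_V\lr(\ldots),\tv(s)\big)_{\ur_m}\,ds,
\end{align*}
so those terms equal $\lim_{\epsilon\to 0}\tfrac1\epsilon[J_{tX}(V+\epsilon\tv)-J_{tX}(V)]$. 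The upshot is an inequality of the form
\begin{align*}
J_{tX}(V') - J_{tX}(V) \ge \big(J_{tX}'(V),\tv\big) + \lambda\|\tv\|^2_{L^2(t,T;\ur_m)} - C_1(L,T)\sup_{s}\|X'(s)-X(s)\|^2_{\hr_m},
\end{align*}
and then, using $\sup_s\|X'(s)-X(s)\|_{\hr_m}=\sup_s\|D_{\tv}X^V(s)\|_{\hr_m}\le C(L,T)\|\tv\|_{L^2(t,T;\ur_m)}$ from \eqref{lem04_1}, the last term is absorbed into a constant $C(L,T)$ times $\|\tv\|^2_{L^2(t,T;\ur_m)}$. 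Choosing $\lambda \ge C(L,T)$ large enough that $\lambda - C(L,T) > 0$ yields strong convexity: $J_{tX}(V') - J_{tX}(V) \ge \big(J_{tX}'(V),\tv\big) + \delta\|\tv\|^2$ with $\delta > 0$.

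With strong convexity in hand, I finish as follows. The functional $J_{tX}$ is (Gâteaux, hence, being convex with locally bounded derivative, continuous and) weakly lower semicontinuous on the Hilbert space $L^2_{\mathcal{W}_{tX}}(t,T;\ur_m)$; the inequality above with $V := 0$ gives $J_{tX}(V') \ge J_{tX}(0) + (J_{tX}'(0),V') + \delta\|V'\|^2 \ge \text{const} - \tfrac{1}{4\delta}\|J_{tX}'(0)\|^2 + \tfrac{\delta}{2}\|V'\|^2$, which is coercive; hence a minimizer exists by the direct method, and strong convexity makes it unique. This proves the first assertion with the explicit threshold $\lambda \ge C(L,T)$. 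For the second assertion, under (A1')-(ii) the coefficients $F$ and $A^j$ are in addition linear in $X$, so $V\mapsto X^V$ is affine with a linear part that no longer loses a factor of $\|X\|$ against $\|V\|$ in the bad direction; more precisely, repeating the estimate but now using (A3)-(ii) — which supplies the extra term $(D_X G(X,V,s), X'-X)_{\hr_m}$ for free and the monotonicity $(D_X G_T(X') - D_X G_T(X), X'-X)_{\hr_m}\ge 0$ — the troublesome $-C_1\sup_s\|X'(s)-X(s)\|^2_{\hr_m}$ term is no longer needed to be absorbed: the $X$-difference contributions are exactly reproduced by the gradient pairing via the same adjoint computation, and the only quadratic remainder left is the genuinely nonnegative $\lambda\|\tv\|^2$ (plus a nonnegative terminal monotonicity term). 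Hence $J_{tX}(V') - J_{tX}(V) \ge (J_{tX}'(V),\tv) + \lambda\|\tv\|^2$ for every $\lambda > 0$, and the same direct-method argument gives existence and uniqueness. The main obstacle is bookkeeping in the first part: carefully tracking that every Lipschitz/cross error term is genuinely $O(\|\tv\|^2_{L^2})$ after invoking \eqref{lem04_1}, so that the constant $C(L,T)$ in the threshold depends only on $L$ and $T$ and not on the particular control $V$ — this is where the linearity in $V$ from (A1')-(i) is essential, since it makes $X^{V'}-X^V$ exactly linear in $\tv$ rather than merely differentiable.
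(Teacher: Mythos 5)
Your second assertion (the case with (A1')-(ii) and (A3)-(ii)) is argued essentially as in the paper: there the control-to-state map really is affine, so $X^{V_2}-X^{V_1}$ solves the linearized equation exactly, the adjoint identity \eqref{thm6'_2} converts the $X$-difference terms into the gradient pairing, and (A3)-(ii) supplies the $\lambda\|V_2-V_1\|^2$ term; this is the paper's computation \eqref{thm6'_1}--\eqref{thm6'_2}. The problem is your first part. You base it on the claim that already under (A1')-(i) the map $V\mapsto X^V$ is affine, so that $X^{V'}-X^V=D_{\tv}X^V$ with no error. That is false: (A1')-(i) only makes $F$ and $A^j$ affine in $V$ for frozen $X$, while $F_1(X,s)$ and $A^j_1(X,s)$ may still be nonlinear in $X$; since $X^V$ itself depends on $V$, the composed solution map is not affine (think of $\dot x=\sin(x)+v$). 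Affinity of the state map is exactly what (A1')-(ii) adds. Your whole first-part inequality hinges on this identification: the linear-in-difference terms you collect involve $X^{V'}(s)-X^V(s)$, whereas the adjoint identity \eqref{thm_5}/\eqref{thm6_1} produces the Gâteaux derivative only when these are replaced by $D_{\tv}X^V(s)$. The discrepancy $X^{V'}-X^V-D_{\tv}X^V$ cannot be bounded by $C(L,T)\|\tv\|^2_{L^2(t,T;\ur_m)}$ under the stated hypotheses: (A1')-(i) gives $D_X F_1$, $D_X A^j_1$ only bounded and continuous (no Lipschitz modulus for the derivatives), so Lemma \ref{lem:05} gives a remainder that is $o(\|\tv\|)$ as $\tv\to 0$ at a fixed $V$, not a quantitative $O(\|\tv\|^2)$ valid uniformly over all pairs $(V,V')$; and this remainder is paired against $D_XG$, $D_XG_T$, which are of order $1+\|X\|+\|V\|$, not small. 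Hence the strong-convexity inequality $J_{tX}(V')-J_{tX}(V)\ge (J_{tX}'(V),\tv)+(\lambda-C(L,T))\|\tv\|^2$ with a constant depending only on $(L,T)$ is not established in the generality of (A1')-(i); as written, your first part proves the theorem only under the additional linearity in $X$, i.e., it collapses into the second part.

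The paper's first part avoids this issue by a different decomposition: it never uses $X^{V'}-X^V=D_{\tv}X^V$. Instead it writes $J_{tX}(\theta V_1+(1-\theta)V_2)$ from both endpoints, integrates the derivative formula \eqref{thm6_1} along the two segments, and combines with weights $\theta$, $1-\theta$ to express the convexity defect \eqref{thm6_4} in terms of differences of the state and adjoint processes attached to the two segments; these differences are then controlled by the Lipschitz-stability estimates \eqref{thm6_5}--\eqref{thm6_6} (which only need Lipschitz dependence of $X^V$, $P^V$, $Q^V$ on the control, not exact linearization), while (A3)-(i) produces the $-2\lambda$ term. If you want to keep your Taylor-expansion route for the first part, you would have to add second-order-type regularity (e.g., Lipschitz continuity of $D_XF_1$, $D_XA^j_1$) to control $X^{V'}-X^V-D_{\tv}X^V$ by $C\|\tv\|^2$, which is not among the stated assumptions; otherwise, switch to the paper's segment-comparison argument. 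Your coercivity and direct-method conclusion are fine once a valid strong-convexity (or strict-convexity-plus-coercivity) inequality is in hand.
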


\begin{proof}	
	From Assumption (A1') we have
	\begin{equation}\label{hv2}
		\begin{aligned}
			&D_V \lr(X,V,s;P,Q)=\left(\mathcal{F}_2(s)\right)^*P+\sum_{j=1}^n \left(\mathcal{A}^j_2(s)\right)^*Q^j+D_V G(X,V,s).
		\end{aligned}
	\end{equation}
	From \eqref{thm_6} and \eqref{hv2}, we have the differentiability of $J_{tX}$: for any $V,\tv\in L_{\mathcal{W}_{tX}}^2(t,T;\ur_m)$, 
	\begin{equation}\label{thm6_1}
		\begin{aligned}
			&\frac{d}{d\epsilon}J_{tX}\left(V+\epsilon\tv\right)\bigg|_{\epsilon=0}\\
			=&\int_t^T \left(D_V \lr(X^V(s),V(s),s;P^V(s),Q^V(s)),\tv(s)\right)_{\ur_m} ds\\
			=&\int_t^T\bigg[\left(\mathcal{F}_2(s)\tv(s),P^V(s)\right)_{\hr_m}+\sum_{j=1}^n \left(\mathcal{A}^j_2(s)\tv(s),Q^{V,j}(s)\right)_{\hr_m}\\
			&\quad\qquad +\left(D_V G(X^V(s),V(s),s),\tv(s)\right)_{\ur_m} \bigg] ds,
		\end{aligned}
	\end{equation}
	where $X^V$ is the state process and $\left(P^V,Q^V\right)$ is the adjoint process corresponding to the control $V$. Now, we prove that $J_{tX}$ is strictly convex when $\lambda$ is large enough. For $V_1,V_2\in  L_{\mathcal{W}_{tX}}^2(t,T;\ur_m)$ and $\theta \in[0,1]$, we can write
	\begin{equation}\label{thm6_2}
		\begin{aligned}
			J_{tX}(\theta V_1+(1-\theta)V_2)&=J_{tX}(V_1+(1-\theta)(V_2-V_1))\\
			&=J_{tX}(V_1)+\int_0^1\frac{d}{d\epsilon}J_{tX}(V_1+\epsilon(1-\theta)(V_2-V_1)) d\epsilon.
		\end{aligned}
	\end{equation}
    (From here on, we use the same $\epsilon$ for both derivative variable and dummy integration variable for the sake of convenience.) From \eqref{thm6_1}, we have
    \small
	\begin{align*}
		&\int_0^1\frac{d}{d\epsilon}J_{tX}(V_1+\epsilon(1-\theta)(V_2-V_1))d\epsilon\\
		=&(1-\theta)\int_0^1\int_t^T \bigg[\left(\mathcal{F}_2(s)(V_2(s)-V_1(s)),P^{1,\theta,\epsilon}(s)\right)_{\hr_m}\\
		&\quad\qquad\qquad\qquad  +\sum_{j=1}^n \left(\mathcal{A}^j_2(s)(V_2(s)-V_1(s)),Q^{1,\theta,\epsilon,j}(s)\right)_{\hr_m}\\
		&\quad\qquad\qquad\qquad +\Big(D_V G\left(X^{1,\theta,\epsilon}(s),V_1(s)+\epsilon(1-\theta)(V_2(s)-V_1(s)),s\right),\  V_2(s)-V_1(s)\Big)_{\ur_m} \bigg]ds d\epsilon,
	\end{align*}
    \normalsize
	where we denote by
	\begin{align*}
		&X^{1,\theta,\epsilon}(\cdot):=X^{V_1+\epsilon(1-\theta)(V_2-V_1)}(\cdot),\quad P^{1,\theta,\epsilon}(\cdot):=P^{V_1+\epsilon(1-\theta)(V_2-V_1)}(\cdot),\\
		&Q^{1,\theta,\epsilon,j}(\cdot):=Q^{V_1+\epsilon(1-\theta)(V_2-V_1),j}(\cdot),\quad 1\le j\le n.
	\end{align*}   
	Similarly, we can also write
	\begin{equation}\label{thm6_3}
		\begin{aligned}
			J_{tX}(\theta V_1+(1-\theta)V_2)&=J_{tX}(V_2+\theta(V_1-V_2))\\
			&=J_{tX}(V_2)+\int_0^1\frac{d}{d\epsilon}J_{tX}(V_2+\epsilon\theta(V_1-V_2)) d\epsilon,
		\end{aligned}
	\end{equation}
	while the second integral term can be expressed as:
	\begin{align*}
		&\int_0^1\frac{d}{d\epsilon}J_{tX}(V_2+\epsilon
		\theta(V_1-V_2))d\epsilon\\
		=&\theta\int_0^1\int_t^T \bigg[\left(\mathcal{F}_2(s)(V_1(s)-V_2(s)),P^{2,\theta,\epsilon}(s)\right)_{\hr_m} \\
		&\quad\qquad\qquad +\sum_{j=1}^n \left(\mathcal{A}^j_2(s)(V_1(s)-V_2(s)),Q^{2,\theta,\epsilon,j}(s)\right)_{\hr_m} \\
		&\quad\qquad\qquad+\left(D_V G(X^{2,\theta,\epsilon}(s),V_2(s)+\epsilon\theta(V_1(s)-V_2(s)),s),\ V_1(s)-V_2(s)\right)_{\ur_m} \bigg]dsd\epsilon,
	\end{align*}
	where we also denote by
	\begin{align*}
		&X^{2,\theta,\epsilon}(\cdot):=X^{V_2+\epsilon\theta(V_1-V_2)}(\cdot),\quad P^{2,\theta,\epsilon}(\cdot):=P^{V_2+\epsilon\theta(V_1-V_2)}(\cdot),\\
		&Q^{2,\theta,\epsilon,j}(\cdot):=Q^{V_2+\epsilon\theta(V_1-V_2),j}(\cdot),\quad 1\le j\le n.
	\end{align*}   
	Adding $\theta$ of \eqref{thm6_2} to $(1-\theta)$ of \eqref{thm6_3}, we have
	\footnotesize
	\begin{equation}\label{thm6_4}
		\begin{aligned}
			&J_{tX}(\theta V_1+(1-\theta)V_2)-\theta J_{tX}(V_1)-(1-\theta)J_{tX}(V_2)\\
			=&\theta(1-\theta)\int_0^1\int_t^T \bigg[\left(\mathcal{F}_2(s)(V_2(s)-V_1(s)),P^{1,\theta,\epsilon}(s)-P^{2,\theta,\epsilon}(s)\right)_{\hr_m}\\
			&\qquad\qquad\qquad\qquad +\sum_{j=1}^n \left(\mathcal{A}^j_2(s)(V_2(s)-V_1(s)),Q^{1,\theta,\epsilon,j}(s)-Q^{2,\theta,\epsilon,j}(s)\right)_{\hr_m}\\
			&\qquad\qquad\qquad\qquad+\Big(D_V G\left(X^{1,\theta,\epsilon}(s),V_1(s)+\epsilon(1-\theta)(V_2(s)-V_1(s)),s\right)\\
			&\qquad\qquad\qquad\qquad\qquad -D_V G\left(X^{2,\theta,\epsilon}(s),V_2(s)+\epsilon\theta(V_1(s)-V_2(s)),s\right),\  V_2(s)-V_1(s)\Big)_{\ur_m}\bigg]dsd\epsilon.
		\end{aligned}
	\end{equation}   
    \normalsize
	Similar to \eqref{lem02_1}, we have the following estimates for $X^{2,\theta,\epsilon}(s)-X^{1,\theta,\epsilon}(s)$,
	\begin{align}\label{thm6_5}
		\sup_{t\le s\le T}\left\|X^{2,\theta,\epsilon}(s)-X^{1,\theta,\epsilon}(s)\right\|_{\hr_m}\le C(L,T)(1-\epsilon)\|V_2(\cdot)-V_1(\cdot)\|_{L^2([t,T];\ur_m)}.
	\end{align}
	Similar to \eqref{lem06_1}, from Assumptions (A2') and the estimate \eqref{thm6_5}, we have the following estimate
	\begin{equation}\label{thm6_6}
		\begin{aligned}
			&\sup_{t\le s\le T}\left\|P^{1,\theta,\epsilon}(s)-P^{2,\theta,\epsilon}(s)\right\|_{\hr_m}+\sum_{j=1}^n \left\| Q^{1,\theta,\epsilon,j}(\cdot)-Q^{2,\theta,\epsilon,j}(\cdot) \right\|_{L^2(t,T;\hr_m)}\\
			\le & C(L,T)(1-\epsilon)\|V_2(\cdot)-V_1(\cdot)\|_{L^2([t,T];\ur_m)}.
		\end{aligned}
	\end{equation}	
	From \eqref{thm6_6} and Assumption (A3), we have
	\begin{equation}\label{thm6_7}
		\begin{aligned}
			&\bigg|\int_0^1\int_t^T\bigg[ \Big(\mathcal{F}_2(s)(V_2(s)-V_1(s)),P^{1,\theta,\epsilon}(s)-P^{2,\theta,\epsilon}(s)\Big)_{\hr_m}\\
			&\quad\qquad\qquad +\sum_{j=1}^n \Big(\mathcal{A}^j_2(s)(V_2(s)-V_1(s)),Q^{1,\theta,\epsilon,j}(s)-Q^{2,\theta,\epsilon,j}(s)\Big)_{\hr_m} \bigg]dsd\epsilon\bigg|\\
			\le& C(L,T)\|V_2(\cdot)-V_1(\cdot)\|^2_{L^2(t,T;\ur_m)}.
		\end{aligned}
	\end{equation}
	From \eqref{thm6_5} and Assumption (A3), we have
	\footnotesize
	\begin{equation}\label{thm6_8}
		\begin{aligned}
			&\int_0^1\int_t^T \bigg(D_V G\left(X^{1,\theta,\epsilon}(s),V_1(s)+\epsilon(1-\theta)(V_2(s)-V_1(s)),s\right)\\
			&\quad\qquad\qquad -D_V G\left(X^{2,\theta,\epsilon}(s),V_2(s)+\epsilon\theta(V_1(s)-V_2(s)),s\right),\  V_2(s)-V_1(s)\bigg)_{\ur_m} dsd\epsilon\\
			\le &\int_0^1\int_t^T \bigg(D_V G \left(X^{2,\theta,\epsilon}(s),V_1(s)+\epsilon(1-\theta)(V_2(s)-V_1(s)),s\right)\\
			&\quad\qquad\qquad-D_V G \left(X^{2,\theta,\epsilon}(s),V_2(s)+\epsilon\theta(V_1(s)-V_2(s)),s\right),\  V_2(s)-V_1(s)\bigg)_{\ur_m} dsd\epsilon\\
			&+\bigg|\int_0^1\int_t^T \bigg(D_V G \left(X^{1,\theta,\epsilon}(s),V_1(s)+\epsilon(1-\theta)(V_2(s)-V_1(s)),s\right)\\
			&\qquad\qquad\qquad -D_V G\left(X^{2,\theta,\epsilon}(s),V_1(s)+\epsilon(1-\theta)(V_2(s)-V_1(s)),s\right),\  V_2(s)-V_1(s)\bigg)_{\ur_m} dsd\epsilon\bigg|\\
			\le& [-2\lambda+C(L,T)]\|V_2(\cdot)-V_1(\cdot)\|^2_{L^2(t,T;\ur_m)}.
		\end{aligned}
	\end{equation}
	\normalsize
	Substituting \eqref{thm6_7} and \eqref{thm6_8} back to \eqref{thm6_4}, we have
	\begin{align*}
		&J_{tX}(\theta V_1+(1-\theta)V_2)-\theta J_{tX}(V_1)-(1-\theta)J_{tX}(V_2)\\
		\le & \theta(1-\theta)[-2\lambda+C(L,T)]\|V_2(\cdot)-V_1(\cdot)\|^2_{L^2(t,T;\ur_m)}.
	\end{align*}   
	Therefore, there exists a constant $C(L,T)$ depending only on $(L,T)$, such that when $\lambda\geq C(L,T)$, $J_{tX}$ is strictly convex. Next, we prove that $J_{tX}(V)\to+\infty$ as $\|V\|_{L^2(t,T;\ur_m)}\to+\infty$. For any $V\in L^2(t,T;\ur_m)$, from Assumptions (A2') and (A3), we have    
	\begin{align*}
		J_{tX}(V)&=\int_t^T G\left(X^V(s),V(s),s\right)ds+G_T\left(X^V(T)\right)\\
		&\geq \int_t^T\Big[G\left(X^V(s),0,s\right)+\left(D_V G\left(X^V(s),0,s\right),V(s)\right)_{\ur_m}+\lambda\|V(s)\|_{\ur_m}^2\Big]ds\\
		&\quad+G_T\left(X^V(T)\right)\\
		&\geq \int_t^T\bigg[ -L\left(1+\left\|X^V(s)\right\|_{\hr_m}^2\right)-L\left(1+\left\|X^V(s)\right\|_{\hr_m}\right)\|V(s)\|_{\ur_m}\\
		&\qquad\qquad +\lambda\|V(s)\|_{\ur_m}^2\bigg]ds-L\left(1+\left\|X^V(T)\right\|_{\hr_m}^2\right).
	\end{align*}
	From \eqref{lem02_1}, we have
	\begin{align*}
		&J_{tX}(V)\geq \big(-\lambda+C(L,T)\big)\|V\|_{L^2(t,T;\ur_m)}^2-C(L,T)\left(1+\|X\|_{\hr_m}^2\right).
	\end{align*}
	So when $\lambda$ is large enough such that $-\lambda+C(L,T)<0$, we know that $J_{tX}(V)\to+\infty$ as $\|V\|_{L^2(t,T;\ur_m)}\to+\infty$. This coercive property and the strict convexity imply that the functional $J$ should possess a unique minimum. That is, \eqref{intr_3} has a unique optimal control.	
	
	Moreover, if Assumption (A1')-(ii) and Assumption (A3)-(ii) are satisfied, then
	\begin{equation}\label{thm6'_1}
		\begin{aligned}
			&J_{tX}(V_2)-J_{tX}(V_1)\\
			=&\int_t^T \left[G \left(X^{V_2}(s),V_2(s),s\right)-G\left(X^{V_1}(s),V_1(s),s\right) \right]ds\\
			&+\left[G_T\left(X^{V_2}(T)\right)-G_T\left(X^{V_1}(T)\right)\right]\\
			\geq&\int_t^T \Big[\left(D_X G(X^{V_1}(s),V_1(s),s),X^{V_2}(s)-X^{V_1}(s)\right)_{\hr_m}\\
			&\qquad +\left(D_V G(X^{V_1}(s),V_1(s),s),V_2(s)-V_1(s)\right)_{\ur_m}+\lambda  \|V_2(s)-V_1(s)\|^2_{\ur_m}\Big]ds\\
			&+\left(D_X G_T(X^{V_1}(T)),X^{V_2}(T)-X^{V_1}(T)\right)_{\hr_m}.
		\end{aligned}
	\end{equation}	    
    Similar to \eqref{thm_5}, from Assumption (A1')-(ii), we have    
    \begin{equation}\label{thm6'_2}
    	\begin{aligned}
    		&\left(D_X G_T(X^{V_1}(T)),X^{V_2}(T)-X^{V_1}(T)\right)_{\hr_m}\\
    		=&\int_t^T \bigg[\left({P}^{V_1}(s),\mathcal{F}_2(s)(V_2(s)-V_1(s))\right)_{\hr_m}\\
    		&\quad\qquad+\sum_{j=1}^n \left({Q^{V_1}}^{j}(s),\mathcal{A}^j_2(s) \left(V_2(s)-V_1(s)\right)\right)_{\hr_m}\\
    		&\quad\qquad -\left(D_X G\left(X^{V_1}(s),V_1(s),s\right),X^{V_2}(s)-X^{V_1}(s)\right)_{\hr_m}\bigg]ds.
    	\end{aligned}    	
    \end{equation}   
    From \eqref{thm6'_1}, \eqref{thm6'_2} and \eqref{thm6_1}, we deduce that    
    \begin{align*}
    	&J_{tX}(V_2)-J_{tX}(V_1)\\
    	\geq&\int_t^T \Big[\left({P}^{V_1}(s),\mathcal{F}_2(s)(V_2(s)-V_1(s))\right)_{\hr_m}+\sum_{j=1}^n \left({Q^{V_1}}^{j}(s),\mathcal{A}^j_2(s) \left(V_2(s)-V_1(s)\right)\right)_{\hr_m}\\
    	&\quad\qquad +\left(D_V G(X^{V_1}(s),V_1(s),s),V_2(s)-V_1(s)\right)_{\ur_m}+\lambda  \|V_2(s)-V_1(s)\|^2_{\ur_m}\Big]ds\\
    	=& \frac{d}{d\epsilon}J_{tX}\left(V_1+\epsilon(V_2-V_1)\right)\bigg|_{\epsilon=0}+\lambda\int_t^T \|V_2(s)-V_1(s)\|^2_{\ur_m} ds.
    \end{align*}	
    That is, $J_{tX}$ is strictly convex when $\lambda>0$. Therefore, \eqref{intr_3} has a unique optimal control for any $\lambda>0$.
\end{proof}

We derive from Theorem~\ref{thm} the following FBSDEs for $(Y_{Xt}(s),P_{Xt}(s),Q_{Xt}(s))$ for $s\in[t,T]$,
\begin{equation}\label{FB:2}
	\left\{
	\begin{aligned}
		&Y_{Xt}(s)=X+\int_t^s F(Y_{Xt}(r),U_{Xt}(r),r)dr+\sum_{j=1}^n\int_t^s  A^j(Y_{Xt}(r),U_{Xt}(r),r)dw_j(r),\\
		&P_{Xt}(s)=D_X G_T(Y_{Xt}(T))+\int_s^T D_X \lr\left(Y_{Xt}(r),U_{Xt}(r),r;P_{Xt}(r),Q_{Xt}(r)\right)dr\\
		&\qquad\qquad -\sum_{j=1}^n \int_s^T Q_{Xt}^j(r)dw_j(r),
	\end{aligned}
	\right.
\end{equation}
with $U_{Xt}(s)$ satisfying the following optimal condition
\begin{align}\label{U}
	D_V\lr(Y_{Xt}(s),U_{Xt}(s),s;P_{Xt}(s),Q_{Xt}(s)){=}0,\quad \text{a.e.}\ s\in[t,T].
\end{align}
As a consequence of Theorems~\ref{thm} and \ref{thm6}, we have the solvability of FBSDEs \eqref{FB:2}-\eqref{U} in space $L^2_{\mathcal{W}_{tX}}(t,T;\hr_m)\times L^2_{\mathcal{W}_{tX}}(t,T;\hr_m)\times (L^2_{\mathcal{W}_{tX}}(t,T;\hr_m))^n$ under Assumptions (A1'), (A2') and (A3).

\section{Application to Mean Field Type Control Problems}\label{sec:MFC}

In this section, we apply our results in Section~\ref{sec:Hilbert} back to the mean field type control problem \eqref{eq:3-21}-\eqref{eq:3-22} for $(t,m)\in [0,T]\times\pr_2(\br^n)$ with an initial $X\in \hr_m$ (being independent of $\mathcal{W}_t$). For any fixed $m\in\pr_2(\brn)$, we can make a connection between the mean field type control problem \eqref{eq:3-21}-\eqref{eq:3-22} and the control problem \eqref{intr_3} in Hilbert spaces by setting for $(X,V,s)\in \hr_m\times \ur_m\times[0,T]$ and $x\in\brn$,
\begin{align}
	&\left.F(X,V,s)\right|_x:=f(X_x,X\otimes m,V_x,s),\label{G}\\
	&\left.A^j(X,V,s)\right|_x:=\sigma^j(X_x,X\otimes m,V_x,s),\quad 1\le j\le n, \label{A}\\
	&G(X,V,s):=\e\left[\int_\brn g(X_x,X\otimes m,V_x,s)dm(x)\right],\label{F}\\
	&G_T(X):=\e\left[\int_\brn g_T(X_x,X\otimes m)dm(x)\right].\label{F_T}
\end{align}
We define the Lagrangian $L:\brn\times\pr_2(\brn)\times\brd\times[0,T]\times\brn\times\br^{n\times n}\to\br$ as
\begin{align}\label{H'}
	L(x,m,v,s;p,q):=p^* f(x,v,m,s)+\sum_{j=1}^n {q^{j}}^* \sigma^j(x,v,m,s)+g(x,v,m,s),
\end{align}
and define $\lr:\hr_m\times \ur_m\times[0,T]\times \hr_m\times \hr_m^n\to\br$ as
\begin{align}\label{H''}
	\lr(X,V,s;P,Q):=\e\left[\int_\brn {L}(X_x,X\otimes m,V_x,s;P_x,Q_x)dm(x)\right].
\end{align}
Then, we know from \eqref{G}-\eqref{H''} that $(G,A,F,\lr)$ satisfies \eqref{H}.

We first give the connection between the linear functional derivative in $\pr_2(\brn)$ and the Gateaux derivative in $\hr_m$. Let $k:\pr_2(\brn)\to\br$ be a differentiable functional such that for any $\mu\in\pr_2(\brn)$, the map $\brn\ni\xi\mapsto\frac{d k}{d\nu}(\mu)(\xi)\in\br$ is differentiable with the derivative $D_\xi\frac{d k}{d\nu}(\mu)(\xi)$ being continuous in $(\mu,\xi)$ and
\begin{align*}
	\left|D_\xi\frac{d k}{d\nu}(\mu)(\xi)\right|\le c(\mu)(1+|\xi|),\quad (\mu,\xi)\in\pr_2(\brn)\times\brn.
\end{align*}
For $m\in\pr_2(\brn)$, we define $K:\hr_m\to\br$ as
\begin{align*}
	K(X):=k(X\otimes m), \quad X\in \hr_m.
\end{align*}
Then, $K$ is G\^ateaux differentiable, and
\begin{align}\label{lem01_1}
	\left.D_X K(X)\right|_x=D_\xi\frac{d k}{d\nu}(X\otimes m)(X_x),\quad X\in \hr_m.
\end{align}
By letting $X$ to be the identity function $I$, \eqref{lem01_1} becomes
\begin{align*}
	\left.D_X K(I)\right|_x=D_x\frac{d k}{d\nu}(m)(x),
\end{align*}
which is identical to the $L$-derivative $\dd_m k(m)(x)$ as introduced in Carmona-Delarue [\cite{book_mfg}]. We also refer to Bensoussan-Frehse-Yam [\cite{AB4}] for further discussion about its connection with Wasserstein gradients. We then give the corresponding differentiability of $F$, $A^j$, $G$, $G_T$ and $\lr$ in $(X,V)\in \hr_m\times \ur_m$ under the following assumptions. For notational convenience, we use the same constant $l>0$ for all the conditions below. \\
\textbf{(B1)} The functions $f$ and $\sigma^j$ (for $1\le j\le n$) satisfy for $(x,m,v,s)\in \brn\times \pr_2(\brn)\times \brd\times[0,T]$,
\begin{align*}
	&|f(x,m,v,s)|\le l\left[1+|x|+W_2(m,\delta_0)+|v|\right],\\
	&|\sigma^j(x,m,v,s)|\le l\left[1+|x|+W_2(m,\delta_0)+|v|\right],
\end{align*}
and they are differentiable in $(x,m,v)\in\brn\times\pr_2(\br^n)\times\brd$. The derivatives $\frac{d f}{d \nu}(x,m,v,s)(\xi)$ and $\frac{d \sigma^j}{d \nu}(x,m,v,s)(\xi)$ are differentiable in $\xi$. The derivatives $\left(D_xf,D_\xi \frac{d f}{d \nu},D_vf,D_x\sigma^j,D_\xi \frac{d \sigma^j}{d \nu},D_v\sigma^j\right)$ are bounded by $l$ and continuous in all arguments.\\
\textbf{(B2)} The functions $g$ and $g_T$ satisfy for $(x,m,v,s)\in \brn\times \pr_2(\brn)\times \brd\times[0,T]$,
\begin{align*}
	&|g(x,m,v,s)|\le l\left(1+|x|^2+W^2_2(m,\delta_0)+|v|^2\right),\\  
	&|g_T(x,m)|\le l\left(1+|x|^2+W^2_2(m,\delta_0)\right).
\end{align*}
The running cost function $g$ is differentiable in $(x,m,v)\in\brn\times\pr_2(\br^n)\times\brd$, and the derivative $\frac{d g}{d \nu}(x,m,v,s)(\xi)$ is differentiable in $\xi$. The terminal cost function $g_T$ is differentiable in $(x,m)\in\brn\times\pr_2(\br^n)$, and the derivative $\frac{d g_T}{d \nu}(x,m)(\xi)$ is differentiable in $\xi$. The derivatives $\left(D_x g,D_\xi\frac{d g}{d \nu},D_v g,D_x g_T,D_\xi\frac{d g_T}{d \nu}\right)$ are continuous in all of their own arguments, and for $(x,m,v,s)\in \brn\times \brd\times \pr_2(\brn)\times[0,T]$,
\begin{align*}
	&|(D_x g,D_v g)(x,m,v,s)|\le l\left(1+|x|+W_2(m,\delta_0)+|v|\right),\\
	&\left|D_\xi\frac{d g}{d \nu}(x,m,v,s)(\xi)\right|\le l\left(1+|x|+W_2(m,\delta_0)+|v|+|\xi|\right),\\
	&|D_x g_T(x,m)|\le l\left(1+|x|+W_2(m,\delta_0)\right),\\ 
	&\left|D_\xi\frac{d g_T}{d \nu}(x,m)(\xi)\right|\le l\left(1+|x|+W_2(m,\delta_0)+|\xi|\right).
\end{align*}
In the rest of this article, for any random variable $\xi$, we use $\bar{\xi}$ to denote its independent copy, and use $\bar{\e}[\bar{\xi}]$ to denote its expectation; at the end, at the most we may only need countably many of independent copies, one can simply construct them right away in $(\Omega,\mathcal{A},\mathbb{P})$ before the discussion of our control problem. The following lemma is proven in \ref{pf:lem:1}.

\begin{lemma}\label{lem:1}
	Under Assumption (B1) and (B2), $F$ as defined in \eqref{G} and $A^j$ as defined in \eqref{A} satisfy (A1); $G$ as defined in \eqref{F} and $G_T$ as defined in \eqref{F_T} also satisfy (A2), with a constant $C(l)$ depending only on $l$. The functional $\lr$ defined in \eqref{H''} satisfies \eqref{lem3_1}. For any $(s,P,Q)\in [0,T]\times\hr_m\times\hr_m^n$, the functional $\hr_m\times \ur_m\ni(X,V)\mapsto \lr(X,V,s;P,Q)\in\br$ is continuously differentiable, with the derivatives satisfying \eqref{lem3_4}. Moreover, for $s\in[0,T]$, $X,\tx\in \hr_m$, $V,\tv\in \ur_m$, we have for $x\in\brn$,
	\footnotesize
	\begin{align}
		&\left.D_X F(X,V,s)\left(\tx\right)\right|_x=\left(D_x f(X_x,X\otimes m,V_x,s)\right)^*\tx_x \notag\\
		&\quad\qquad\qquad\qquad\qquad\qquad +\bar{\e}\left[\int_\brn \left(D_\xi\frac{d f}{d\nu}(X_x,X\otimes m,V_x,s)\left(\bar{X}_y\right)\right)^*\bar{\tx}_y dm(y)\right];\label{lem1_2}\\
		&\left.D_V F(X,V,s)\left(\tv\right)\right|_x=\left(D_v f(X_x,X\otimes m,V_x,s)\right)^*\tv_x;\label{lem1_3}\\
		&\left.D_X A^j(X,V,s)\left(\tx\right)\right|_x=\left(D_x \sigma^j(X_x,X\otimes m,V_x,s)\right)^*\tx_x\notag\\
		&\qquad\qquad\qquad\qquad\qquad\qquad+\bar{\e}\left[\int_\brn \left(D_\xi\frac{d \sigma^j}{d\nu}(X_x,X\otimes m,V_x,s)\left(\bar{X}_y\right)\right)^*\bar{\tx}_y dm(y)\right];\label{lem1_2'}\\
		&\left.D_V A^j(X,V,s)\left(\tv\right)\right|_x=\left(D_v \sigma^j(X_x,X\otimes m,V_x,s)\right)^*\tv_x;\label{lem1_3'}\\
		&\left.D_X G(X,V,s)\right|_x=D_x g(X_x,X\otimes m,V_x,s)\notag\\
		&\quad\qquad\qquad\qquad\qquad+\bar{\e}\left[\int_\brn D_\xi \frac{d g}{d\nu}\left(\bar{X}_y,X\otimes m,\bar{V}_y,s\right)(X_x) dm(y)\right];\label{lem2_2}\\
		&\left.D_V G(X,V,s)\right|_x=D_v g(X_x,X\otimes m,V_x,s);\label{lem2_3}\\
		&\left.D_X G_T(X)\right|_x=D_x h(X_x,X\otimes m)+\bar{\e}\left[\int_\brn D_\xi\frac{d g_T}{d\nu}\left(\bar{X}_y,X\otimes m\right)(X_x) dm(y)\right]; \label{lem2_4}
	\end{align}
    \normalsize
	and for $(P,Q)\in \hr_m\times\hr_m^n$, we have for $x\in\brn$,
	\footnotesize
	\begin{align}	
		\left.D_X\lr(X,V,s;P,Q)\right|_x=&D_x f(X_x,X\otimes m,V_x,s) P_x+\sum_{j=1}^n  \sigma^j(X_x,X\otimes m,V_x,s) Q^j_x \notag\\
		& +D_x g(X_x,X\otimes m,V_x,s)\notag\\
		&+\bar{\e}\left[\int_\brn  D_\xi\frac{d f}{d\nu}\left(\bar{X}_y,X\otimes m,\bar{V}_y,s\right)(X_x) {P}_y dm(y)\right] \notag \\
		&+\sum_{j=1}^n \bar{\e}\left[\int_\brn D_\xi\frac{d \sigma^j}{d\nu}\left(\bar{X}_y,X\otimes m,\bar{V}_y,s\right)(X_x) {{Q}^j_y} dm(y)\right] \notag \\
		& +\bar{\e}\left[\int_\brn D_\xi\frac{d g}{d\nu}\left(\bar{X}_y,X\otimes m,\bar{V}_y,s\right)(X_x) dm(y)\right],\label{lem3_2}\\
		\left.D_V \lr(X,V,s;P,Q)\right|_x=&D_v f(X_x,X\otimes m,V_x,s)P_x+\sum_{j=1}^n  D_v \sigma^j (X_x,X\otimes m,V_x,s) {Q^j_x}\notag\\
		& +D_v g(X_x,X\otimes m,V_x,s), \label{lem3_3}
	\end{align}
    \normalsize
	where $\left(\bar{X},\bar{\tx},\bar{V}\right)$ is an independent copy of $(X,\tx,V)$.
\end{lemma}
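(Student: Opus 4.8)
The argument has two components --- the growth/boundedness estimates and the derivative formulas \eqref{lem1_2}--\eqref{lem3_3} --- and the plan is to deduce both from the pointwise hypotheses (B1)--(B2) together with the two facts already at our disposal: the isometry $W_2(X\otimes m,\delta_0)=\|X\|_{\hr_m}$ from \eqref{m2}, and the chain rule \eqref{lem01_1} for functionals $X\mapsto k(X\otimes m)$.

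For the estimates I would substitute $(x,m,v)=(X_x,X\otimes m,V_x)$ into the pointwise bounds; e.g.
\begin{align*}
	\|F(X,V,s)\|_{\hr_m}^2&=\e\!\left[\int_\brn|f(X_x,X\otimes m,V_x,s)|^2dm(x)\right]\\
	&\le l^2\,\e\!\left[\int_\brn\big(1+|X_x|+\|X\|_{\hr_m}+|V_x|\big)^2dm(x)\right]\le C(l)\big(1+\|X\|_{\hr_m}^2+\|V\|_{\ur_m}^2\big),
\end{align*}
and taking square roots gives the linear bound of (A1); the same works for each $A^j$, and (B2) gives the quadratic bounds on $G$, $G_T$ in (A2). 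Since $\lr=(P,F)_{\hr_m}+\sum_j(Q^j,A^j)_{\hr_m}+G$ by \eqref{H}, Cauchy--Schwarz and Young's inequality yield \eqref{lem3_1}. The derivative bounds in (A1), (A2) and \eqref{lem3_4} are then read off term by term from \eqref{lem1_2}--\eqref{lem3_3} once those are established, using that $D_xf,D_\xi\frac{df}{d\nu},D_vf$ and their $\sigma^j$-analogues are bounded by $l$, that $D_xg,D_vg,D_xg_T,D_\xi\frac{dg}{d\nu},D_\xi\frac{dg_T}{d\nu}$ obey the linear-growth bounds of (B2), Cauchy--Schwarz in the independent-copy integral, and once more \eqref{m2}.

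The heart is the derivation of \eqref{lem1_2}--\eqref{lem3_3}. Fixing $(X,V,s)$ and a direction $\tx\in\hr_m$, I would compute $D_XF(X,V,s)(\tx)|_x$ by differentiating $\theta\mapsto f\big(X_x+\theta\tx_x,(X+\theta\tx)\otimes m,V_x,s\big)$ at $\theta=0$: the first slot contributes $\big(D_xf(X_x,X\otimes m,V_x,s)\big)^{*}\tx_x$ by the classical chain rule, while for the measure slot I apply \eqref{lem01_1} componentwise to $k(\mu):=f_i(X_x,\mu,V_x,s)$ --- its linear functional derivative $\frac{df}{d\nu}$ being $\xi$-differentiable with $D_\xi\frac{df}{d\nu}$ continuous and bounded by (B1), so the hypotheses of \eqref{lem01_1} hold --- producing the copy-integral term of \eqref{lem1_2}, in which $(\bar X,\bar\tx)$ is an independent copy of $(X,\tx)$ encoding the velocity of the flow $\theta\mapsto(X+\theta\tx)\otimes m$. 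Convergence of the difference quotient, uniformly for $(X,V)$ in bounded sets, and continuity of $(X,V)\mapsto D_XF(X,V,s)$ follow by dominated convergence from the boundedness and continuity in (B1); \eqref{lem1_2'} is the same with $\sigma^j$ in place of $f$, and \eqref{lem1_3}, \eqref{lem1_3'} are immediate since only the $v$-slot moves and there is no measure term. For $G$ and $G_T$ the measure argument sits inside the outer $dm(x)$-integral, so differentiation produces a double integral $\e\int_\brn\bar\e\int_\brn D_\xi\frac{dg}{d\nu}(X_x,X\otimes m,V_x,s)(\bar X_y)\cdot\bar\tx_y\,dm(y)\,dm(x)$; a Fubini step and a relabeling interchanging the reference variable $x$ with the copy index $y$ --- legitimate since $(X,V)$ and $(\bar X,\bar V)$ are exchangeable and $\bar X\otimes m=X\otimes m$ --- recasts this as $\e\int_\brn\big(D_XG(X,V,s)|_x\big)^{*}\tx_x\,dm(x)$ with $D_XG$ as in \eqref{lem2_2}, and likewise \eqref{lem2_4}; \eqref{lem2_3} is immediate. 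Finally, from $\lr=(P,F)_{\hr_m}+\sum_j(Q^j,A^j)_{\hr_m}+G$ one has $D_X\lr(X,V,s;P,Q)(\tx)=(P,D_XF(\tx))_{\hr_m}+\sum_j(Q^j,D_XA^j(\tx))_{\hr_m}+(D_XG,\tx)_{\hr_m}$; inserting \eqref{lem1_2}, \eqref{lem1_2'}, \eqref{lem2_2} and performing the same Fubini-plus-relabeling to collect the coefficient of $\tx_x$ gives \eqref{lem3_2}, while \eqref{lem3_3} follows identically from the $V$-derivatives (again with no measure term).

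I expect the main obstacle to be the rigorous treatment of the measure slot: one must check that \eqref{lem01_1} genuinely applies (the continuity and linear growth of $D_\xi\frac{df}{d\nu}$, $D_\xi\frac{dg}{d\nu}$ and the like in (B1)--(B2) are precisely what that requires), and one must be disciplined with the bookkeeping of independent copies --- including copies of $P$ and $Q$ --- when assembling $D_X\lr$, so that the exchangeability/relabeling step is valid and the final estimate is the locally-quadratic \eqref{lem3_4} rather than a uniform one, the latter being impossible because $D_\xi\frac{dg}{d\nu}$ is only of linear growth in its arguments. Everything else --- the difference-quotient limits and continuity of the derivatives --- is routine dominated convergence given the continuity and boundedness postulated in (B1)--(B2).
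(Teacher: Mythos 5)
Your proposal is correct and follows essentially the same route as the paper: pointwise substitution of the (B1)--(B2) bounds together with $W_2(X\otimes m,\delta_0)=\|X\|_{\hr_m}$ for the growth estimates, the chain rule in the state slot plus the linear functional derivative expressed through independent copies for \eqref{lem1_2}--\eqref{lem2_4}, dominated convergence for the difference quotients, and the Fubini-plus-relabeling step to collect the coefficient of $\tx_x$ in \eqref{lem2_2}, \eqref{lem2_4} and \eqref{lem3_2}. The only cosmetic difference is that where you invoke the established chain rule \eqref{lem01_1} componentwise with the first and third arguments frozen, the paper writes out the interpolated difference quotient (along $(1-\lambda)X\otimes m+\lambda(X+\epsilon\tx)\otimes m$) explicitly before passing to the limit, which is the same dominated-convergence argument.
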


Let $\hat{v}_x(s)$ be an optimal control for the mean field control problem \eqref{eq:3-21}-\eqref{eq:3-22} and $\hx_x(s)$ be the corresponding state process. The adjoint process is defined as
\footnotesize
\begin{equation}\label{p'}
	\begin{aligned}
		\hp_x(s)=& D_xg_T\left(\hx_x(T),\hx(T)\otimes m\right)\\
		&+\bar{\e}\left[\int_\brn D_\xi\frac{d g_T}{d\nu}\left(\bar{\hx}_y(T),\hx(T)\otimes m\right)\left(\hx_x(T)\right) dm(y)\right]\\
		&+\int_s^T \bigg[D_x {L}\left(\hx_x(r),\hx(r)\otimes m,\hat{v}_x(r),r;\hp_x(r),\hq_x(r)\right)\\
		&\qquad\qquad+\bar{\e}\bigg(\int_\brn D_\xi \frac{d L}{d\nu}\left(\bar{\hx}_y(r),\hx(r)\otimes m,\bar{\hat{v}}_y(r),r;\bar{\hp}_y(r),\bar{\hq}_y(r)\right) \left(\hx_x(r)\right) dm(y)\bigg)\bigg]dr\\
		&-\sum_{j=1}^n \int_s^T \hq^j_x(r)dw_j(r),\quad s\in[t,T],
	\end{aligned}
\end{equation}
\normalsize
where $\left(\bar{\hat{X}}(s),\bar{\hat{v}}(s),\bar{\hat{P}}(s),\bar{\hat{Q}}(s)\right)$ is an independent copy of $\left(\hx(s),\hat{v}(s),\hp(s),\hq(s)\right)$. By Lemma~\ref{lem:1}, we know that $\left(\hp_x(s),\hq_x(s)\right)$, defined in \eqref{p'}, satisfies Equation \eqref{p}. The following necessary condition is then a direct consequence of Theorem~\ref{thm} and Lemma~\ref{lem:1}.
\begin{theorem}\label{thm'}
	Under Assumptions (B1) and (B2), let $\hat{v}_x(s)$ be an optimal control for the mean field control problem \eqref{eq:3-21}-\eqref{eq:3-22}, $\hx_x(s)$ be the corresponding controlled state process, and $\left(\hp_x(s),\hq_x(s)\right)$ be the corresponding adjoint process. Then, we have the optimality condition
	\small
	\begin{align*}
		D_v L\left(\hx_x(s),\hx(s)\otimes m,\hat{v}_x(s),s;\hp_x(s),\hq_x(s)\right)=0,\  \text{a.e.}\ s\in[t,T],\  \text{a.s.}\ dm(x), \ \text{a.s.}\ d\mathbb{P}(\omega).
	\end{align*}
    \normalsize
\end{theorem}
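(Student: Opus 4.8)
The plan is to obtain Theorem~\ref{thm'} as a direct corollary of the abstract necessary condition Theorem~\ref{thm}, by instantiating the Hilbert-space control problem \eqref{intr_3} with the coefficients \eqref{G}--\eqref{F_T} constructed from $f,\sigma,g,g_T$, and then reading the abstract optimality condition $D_V\lr\overset{\ur_m}{=}0$ through the componentwise derivative formulas supplied by Lemma~\ref{lem:1}.

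First I would record, via Lemma~\ref{lem:1}, that under (B1)--(B2) the maps $F,A^j$ of \eqref{G}--\eqref{A} satisfy (A1) and the functionals $G,G_T$ of \eqref{F}--\eqref{F_T} satisfy (A2), and that $\lr$ defined in \eqref{H''} coincides with the Lagrangian \eqref{H} built from these $(F,A,G,G_T)$. With this identification, the state equation \eqref{eq:3-21} becomes exactly the state equation of \eqref{intr_3}, and the loss functional \eqref{eq:3-22} becomes exactly $J_{tX}$ of \eqref{intr_3}; moreover the two problems share the very same control space $L^2_{\mathcal{W}_{tX}}(t,T;\ur_m)$. Hence an optimal control $\hat{v}_x(s)$ for the mean field type problem \eqref{eq:3-21}--\eqref{eq:3-22} is nothing but an optimal control $\hv$ for \eqref{intr_3} (with $\hv_x(s)=\hat{v}_x(s)$ for $m\otimes\mathbb{P}$-a.e.\ $(x,\omega)$), sharing the same optimal state $\hx$.

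Next I would identify the adjoint process. Using the formula \eqref{lem3_2} for $\left.D_X\lr(X,V,s;P,Q)\right|_x$ and the formula \eqref{lem2_4} for $\left.D_XG_T(X)\right|_x$ from Lemma~\ref{lem:1}, one checks that the process $(\hp_x(s),\hq_x(s))$ defined by \eqref{p'} satisfies, componentwise in $x$, the BSDE \eqref{p} with coefficients $(G_T,\lr)$ as above: its terminal value is $\left.D_XG_T(\hx(T))\right|_x$ and its driver is $\left.D_X\lr(\hx(r),\hv(r),r;\hp(r),\hq(r))\right|_x$. Since, by Lemma~\ref{lem:1} together with (A1)--(A2), the BSDE \eqref{p} has a unique solution in $L^2_{\mathcal{W}_{tX}}(t,T;\hr_m)\times(L^2_{\mathcal{W}_{tX}}(t,T;\hr_m))^n$ obeying \eqref{lem06_1}, the pair $(\hp,\hq)$ is exactly that solution. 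Theorem~\ref{thm} then yields $D_V\lr\bigl(\hx(s),\hv(s),s;\hp(s),\hq(s)\bigr)\overset{\ur_m}{=}0$ for a.e.\ $s\in[t,T]$.

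Finally I would translate this abstract identity. By \eqref{lem3_3}, $\left.D_V\lr(\hx(s),\hv(s),s;\hp(s),\hq(s))\right|_x=D_vL\bigl(\hx_x(s),\hx(s)\otimes m,\hat{v}_x(s),s;\hp_x(s),\hq_x(s)\bigr)$, so that the vanishing of $D_V\lr$ in $\ur_m$ for a.e.\ $s$ means $\e\bigl[\int_\brn\bigl|D_vL(\hx_x(s),\hx(s)\otimes m,\hat{v}_x(s),s;\hp_x(s),\hq_x(s))\bigr|^2dm(x)\bigr]=0$ for a.e.\ $s\in[t,T]$, which is precisely the asserted condition a.e.\ $s$, a.s.\ $dm(x)$, a.s.\ $d\mathbb{P}(\omega)$. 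I do not expect any genuine obstacle here: all of the analytic content --- in particular the appearance of the independent-copy terms $\bar{\e}\bigl[\int_\brn D_\xi\frac{d f}{d\nu}(\cdots)(\bar{X}_y)^*\bar{\tx}_y\,dm(y)\bigr]$ and their analogues in \eqref{lem1_2}--\eqref{lem3_3}, coming from differentiating the map $X\mapsto X\otimes m$ against the definition of the linear functional derivative --- is already packaged into Lemma~\ref{lem:1}; granting it, the only care needed is the bookkeeping check that \eqref{p'} is indeed the componentwise form of \eqref{p} and that the $\ur_m$-null condition is equivalent to the stated pointwise one.
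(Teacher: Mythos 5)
Your proposal is correct and follows essentially the same route as the paper: the paper also obtains Theorem~\ref{thm'} as a direct consequence of Theorem~\ref{thm} and Lemma~\ref{lem:1}, noting via the derivative formulas \eqref{lem2_4}, \eqref{lem3_2} that the adjoint process \eqref{p'} is the componentwise form of \eqref{p}, and reading $D_V\lr\overset{\ur_m}{=}0$ through \eqref{lem3_3} to get the stated pointwise condition. No gaps to report.
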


To give a sufficient condition for the mean field type control problem \eqref{eq:3-21}-\eqref{eq:3-22}, we need the following assumptions, which are the regularity-enriched version of Assumptions (B1) and (B2).\\
\textbf{(B1')} (i) The coefficients $f$ and $\sigma^j$ (for $1\le j\le n$) are linear in $v$. That is,
\begin{align*}
	&f(x,m,v,s)=f_1(x,m,s)+f_2(s)v,\\
	&\sigma^j(x,m,v,s)=\sigma^j_1(x,m,s)+\sigma^j_2(s)v,
\end{align*}
with $f_1,\sigma^j_1$ satisfying Assumption (B1),  and $f_2(s),\sigma^j_2(s)\in\br^{n\times d}$ being bounded by $l$.\\
(ii) Furthermore, the functions $f_1$ and $\sigma_1^j$ (for $1\le j\le n$)  are linear in $x$ and $m$. That is,
\begin{align*}
	&f_1(x,m,s)=f_0(s)+f_1(s)x+\bar{f}_1(s)\int_\brn ydm(y),\\
	&\sigma^j_1(x,m,s)=\sigma^j_0(s)+\sigma^j_1(s)x+\bar{\sigma}^j_1(s)\int_\brn ydm(y),
\end{align*}
with $g_0(s),\sigma^j_0(s)\in \brn$ and $f_1(s),\bar{f}_1(s),\sigma^j_1(s),\bar{\sigma}^j_1(s)\in\br^{n\times n}$ being bounded by $l$.\\
\textbf{(B2')} The functionals $g$ and $g_T$ satisfy (B2). The derivatives 
\begin{align*}
	\left(D_x g,D_\xi \frac{d g}{d\nu},D_v g,D_x g_T,D_\xi\frac{d g_T}{d\nu}\right)
\end{align*}
are $l$-Lipschitz continuous in $(x,m,v,\xi)$. \\
We also need the following convexity assumption.\\
\textbf{(B3)} (i) There exists $\lambda> 0$ such that for any $(x,m,v,v')\in\brn\times\pr_2(\brn)\times\brd\times\brd$ and $s\in[0,T]$,
\begin{align*}
	&g(x,m,v',s)-g(x,m,v,s)\geq \left(D_v g (x,m,v,s)\right)^* (v'-v)+\lambda |v'-v|^2.
\end{align*}
(ii) There exists $\lambda>0 $ such that, for any $x,x',\xi,\xi'\in\brn$, $v,v'\in\brd$ and $s\in[0,T]$,
\begin{align*}
	g(x',m',v',s)-g(x,m,v,s)&\geq \left(D_x g (x,m,v,s)\right)^* (x'-x)\\
	&\qquad +\int_\brn\frac{d g}{d\nu}(x,m,v,s)(\xi)d(m'-m)(\xi)\\
	&\qquad +\left(D_v g (x,m,v,s)\right)^* (v'-v)+\lambda |v'-v|^2;\\
	\frac{dg}{d\nu}(x,m,v,s)(\xi')-\frac{dg}{d\nu}(x,m,v,s)(\xi)&\geq \left(D_\xi \frac{dg}{d\nu}(x,m,v,s)(\xi)\right)^* (\xi'-\xi);\\
	g_T(x',m')-g_T(x,m)&\geq \left(D_x g_T (x,m)\right)^* (x'-x)\\
	&\qquad +\int_\brn\frac{d g_T}{d\nu}(x,m)(\xi)d(m'-m)(\xi);\\
	\frac{dg_T}{d\nu}(x,m)(\xi')-\frac{dg_T}{d\nu}(x,m)(\xi)&\geq \left(D_\xi \frac{dg_T}{d\nu}(x,m)(\xi)\right)^*  (\xi'-\xi).
\end{align*}
From Assumption (B1'), we know that $F$ and $A^j$ defined in \eqref{G} and \eqref{A} satisfy the condition (A1'). From Assumption (B2') and Lemma~\ref{lem:1}, we know that functionals $G$ and $G_T$ respectively defined in \eqref{F} and \eqref{F_T} satisfy the Lipschitz continuity conditions in (A2') with a constant $C(l)$ depending only on $l$. By Assumption (B3)-(i), we have for any $X\in \hr_m$, $V,V'\in \ur_m$, and $s\in[0,T]$, 
\begin{align*}
	&G(X,V',s)-G(X,V,s)\\
	=\ &\e\left[\int_\brn \left[{g}(X_x,X\otimes m,V'_x,s)- {g}(X_x,X\otimes m,V_x,s)\right]dm(x)\right]\\
	\geq\ & \e\left[\int_\brn \left[\left(D_v g(X_x,X\otimes m,V_x,s)\right)^* (V'_x-V_x)+\lambda |V'_x-V_x|^2\right]dm(x)\right]\\
	=\ &\e\left[\int_\brn \left(\left.D_V G(X,V,s)\right|_x\right)^* (V'_x-V_x) dm(x)\right]+\lambda\|V'-V\|^2_{\ur_m}.
\end{align*} 
That is, $G$ satisfies the convexity conditions in (A3)-(i). Furthermore, if Assumption (B3)-(ii) is satisfied, by Fubini's theorem, we have
\small
\begin{align*}
	&G(X',V',s)-G(X',V,s)\\
	=\ &\e\left[\int_\brn \left[{g}(X'_x,X'\otimes m,V'_x,s)- {g}(X_x,X\otimes m,V_x,s)\right]dm(x)\right]\\
	\geq \ &\e\bigg[\int_\brn \bigg[\left(D_x g (X_x,X\otimes m,V_x,s)\right)^* (X'_x-X_x)\\
	&\qquad\qquad +\int_\brn\frac{d g}{d\nu}(X_x,X\otimes m,V_x,s)(\xi)d(X'\otimes m-X\otimes m)(\xi)\\
	&\qquad\qquad +\left(D_v g (X_x,X\otimes m,V_x,s)\right)^* (V'_x-V_x)+\lambda |V'_x-V_x|^2\bigg]dm(x)\bigg]\\
	=\ &\e\bigg[\int_\brn \bigg[ \left(D_x g (X_x,X\otimes m,V_x,s)\right)^* (X'_x-X_x)\\
	&\qquad\qquad +\bar{\e}\bigg(\int_\brn\Big(\frac{d g}{d\nu}(X_x,X\otimes m,V_x,s)(\bar{X}'_\xi)\\
	&\qquad\qquad\qquad\qquad\qquad -\frac{d g}{d\nu}(X_x,X\otimes m,V_x,s)(\bar{X}_\xi)\Big)dm(\xi)\bigg)\bigg] dm(x)\bigg]\\
	&\quad +\left(D_V G(X,V,s),\ V'-V\right)_{\ur_m}+\lambda\|V'-V\|^2_{\ur_m}\\
	\geq\ & \e\bigg[\int_\brn \bigg[\left(D_x g (X_x,X\otimes m,V_x,s)\right)^* (X'_x-X_x)\\
	&\qquad\qquad +\bar{\e}\bigg(\int_\brn \left(D_\xi \frac{d g}{d\nu}(X_x,X\otimes m,V_x,s)(\bar{X}_\xi)\right)^* (\bar{X}'_\xi-\bar{X}_\xi)dm(\xi)\bigg)\bigg]dm(x)\bigg]\\
	&\quad +\left(D_V G(X,V,s),\ V'-V\right)_{\ur_m}+\lambda\left\|V'- V\right\|^2_{\ur_m}\\
	=\ & \e\bigg[\int_\brn \bigg[ D_x g (X_x,X\otimes m,V_x,s)\\
	&\qquad\qquad +\bar{\e}\left(\int_\brn D_\xi \frac{d g}{d\nu}(\bar{X}_\xi,X\otimes m,\bar{V}_\xi,s)(X_x)dm(\xi)\right)\bigg]^* (X'_x-X_x)dm(x)\bigg]\\
	&\quad +\left(D_V G(X,V,s),\ V'-V\right)_{\ur_m}+\lambda\left\|V'- V\right\|^2_{\ur_m}\\
	=\ &\left(D_X G(X,V,s),\ X'-X\right)_{\hr_m}+\left(D_V G(X,V,s),\ V'-V\right)_{\ur_m}+\lambda\left\|V'- V\right\|^2_{\ur_m},
\end{align*} 
\normalsize
where $(\bar{X},\bar{X}',\bar{V})$ is an independent copy of $(X,X',V)$. That is, $G$ satisfies the convexity conditions in (A3)-(ii). The argument leading to the convexity for $G_T$ can be shown similarly. As a consequence of Theorem~\ref{thm6}, we have the following solvability of mean field type control problem \eqref{eq:3-21}-\eqref{eq:3-22}.

\begin{theorem}\label{thm1'}
	Under Assumptions (B1')-(i), (B2') and (B3)-(i), there exists a constant $C(l,T)$ depending only on $(l,T)$, such that when $\lambda\geq C(l,T)$, $J_{tX}$ is strictly convex, and the mean field control problem \eqref{eq:3-21}-\eqref{eq:3-22} has a unique optimal control. Furthermore, if Assumptions (A1')-(ii) and (A3)-(ii) are satisfied, it is sufficient to have the well-posedness for any $\lambda> 0$.
\end{theorem}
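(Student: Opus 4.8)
The plan is to obtain Theorem~\ref{thm1'} as a direct corollary of Theorem~\ref{thm6}, by checking that the Hilbert-space data $(F,A,G,G_T)$ attached to the mean field coefficients $(f,\sigma,g,g_T)$ through \eqref{G}--\eqref{F_T} inherit Assumptions (A1'), (A2') and (A3) from (B1'), (B2') and (B3). Most of this verification is already carried out in the discussion preceding the statement: under (B1')-(i) the maps $F$ and $A^j$ are affine in $V$, with $\mathcal{F}_2(s)V|_x=f_2(s)V_x$ and $\mathcal{A}^j_2(s)V|_x=\sigma^j_2(s)V_x$, so (A1')-(i) holds with constant $L=C(l)$; under (B2') and Lemma~\ref{lem:1} the derivatives $D_X G$, $D_V G$, $D_X G_T$ are $C(l)$-Lipschitz, giving the Lipschitz part of (A2') (the growth part of (A2') being already supplied by Lemma~\ref{lem:1}); and under (B3)-(i) the pointwise convexity inequality for $g$ integrates against $dm(x)$ to the $\ur_m$-convexity inequality for $G$, which is precisely (A3)-(i).

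What remains is to record the two items only sketched above. First I would complete the verification that $G_T$ satisfies the monotonicity inequality in (A3)-(ii): starting from $G_T(X')-G_T(X)=\e[\int_\brn (g_T(X'_x,X'\otimes m)-g_T(X_x,X\otimes m))\,dm(x)]$, one applies the two pointwise inequalities for $g_T$ and $\frac{d g_T}{d\nu}$ in (B3)-(ii) together with Fubini's theorem and the independent-copy device, exactly as was done for $G$ in the text, to obtain $(D_X G_T(X')-D_X G_T(X),X'-X)_{\hr_m}\ge 0$. Second I would note that (B1')-(ii) forces $F_1$ and $A^j_1$ to be affine in $X$: the term $\bar f_1(s)\int_\brn y\,d(X\otimes m)(y)=\bar f_1(s)\,\e[\int_\brn X_y\,dm(y)]$ defines a bounded linear operator on $\hr_m$ (constant in $x$), so $F_1(X,s)=F_0(s)+\mathcal{F}_1(s)X$ with $\|\mathcal{F}_1(s)\|\le C(l)$, and likewise for $A^j_1$; hence (A1')-(ii) holds as well.

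With all hypotheses of Theorem~\ref{thm6} in force, that theorem produces a constant $C(L,T)=C(C(l),T)=:C(l,T)$ such that for $\lambda\ge C(l,T)$ the functional $J_{tX}$ on $L^2_{\mathcal{W}_{tX}}(t,T;\ur_m)$ is strictly convex and coercive, hence has a unique minimiser; and when in addition (B1')-(ii) and (B3)-(ii) hold, strict convexity and a unique minimiser hold for every $\lambda>0$. I would then close the argument by observing that the mean field type control problem \eqref{eq:3-21}--\eqref{eq:3-22} and the abstract problem \eqref{intr_3} with this data are literally the same optimisation problem: by \eqref{G}--\eqref{A} the state equation \eqref{eq:3-21} is \eqref{intr_3} read componentwise in $x$, and by \eqref{F}--\eqref{F_T} the loss \eqref{eq:3-22} equals $\int_t^T G(X^V(s),V(s),s)\,ds+G_T(X^V(T))$, so the assignment $v_x(\cdot)\mapsto V(\cdot)$ with $V_x:=v_x$ is a cost-preserving bijection between admissible controls. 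Thus the unique optimal $V$ furnished by Theorem~\ref{thm6} yields the unique optimal $\hat v$, which is the assertion.

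I expect the only genuine work to be the bookkeeping in the two items above---in particular, keeping track that the growth and Lipschitz constants coming out of Lemma~\ref{lem:1} depend only on $l$, so that the final threshold depends only on $(l,T)$, and handling the $G_T$-monotonicity with the independent-copy notation; everything else is a direct invocation of Theorem~\ref{thm6}. One small point worth flagging: in the ``furthermore'' clause the intended hypotheses should be (B1')-(ii) and (B3)-(ii) rather than (A1')-(ii) and (A3)-(ii).
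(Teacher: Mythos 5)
Your proposal is correct and follows essentially the same route as the paper: the text preceding Theorem~\ref{thm1'} verifies that $(F,A^j,G,G_T)$ defined in \eqref{G}--\eqref{F_T} inherit (A1'), (A2') and (A3) from (B1'), (B2') and (B3) (with the $G_T$-monotonicity ``shown similarly'' via the same Fubini/independent-copy computation you outline), and then the theorem is obtained as a direct consequence of Theorem~\ref{thm6}. Your remark that the ``furthermore'' clause should read (B1')-(ii) and (B3)-(ii) is also consistent with the paper's intent.
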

We derive from Theorem~\ref{thm'} the following FBSDEs, for $s\in[t,T]$,
\small
\begin{equation}\label{FB:3}
	\left\{
	\begin{aligned}
		&Y_{X_xmt}(s)=X_x+\int_t^r f(Y_{X_xmt}(r),Y_{Xmt}(r)\otimes m,u_{X_xmt}(r),r)dr\\
		&\ \ \qquad\qquad\qquad +\sum_{j=1}^n \int_t^s \sigma^j(Y_{X_xmt}(r),Y_{Xmt}(r)\otimes m,u_{X_xmt}(r),r)dw_j(r),\\
		&P_{X_xmt}(s)=D_xg_T(Y_{X_xmt}(T),Y_{Xmt}(T)\otimes m)\\
		&\quad\qquad\qquad +\bar{\e}\left[\int_\brn D_\xi\frac{d g_T}{d\nu}\left(\bar{Y}_{\bx_ymt}(T),Y_{Xmt}(T)\otimes m\right)(Y_{X_xmt}(T)) dm(y)\right]\\
		&\quad\qquad\qquad +\int_s^T \bigg[D_x {L}(Y_{X_xmt}(r),Y_{Xmt}(r)\otimes m,u_{X_xmt}(r),r;P_{X_xmt}(r),Q_{X_xmt}(r))\\
		&\quad\qquad\qquad\qquad\qquad +\bar{\e}\bigg(\int_\brn D_\xi \frac{d L}{d\nu}\Big(\bar{Y}_{\bx_ymt}(r),Y_{Xmt}(r)\otimes m,\bar{u}_{\bx_ymt}(r),r;\\
		&\quad\qquad\qquad\qquad\qquad\qquad\qquad\qquad\qquad \bar{P}_{\bx_ymt}(r),\bar{Q}_{\bx_ymt}(r)\Big)\ (X_{X_xmt}(r)) dm(y)\bigg)\bigg]dr\\
		&\quad\qquad\qquad -\sum_{j=1}^n \int_s^T Q^j_{X_xmt}(r)dw_j(r),
	\end{aligned}
	\right.
\end{equation}
\normalsize
where $u_{X_x mt}(s)$ satisfies the following optimal condition
\begin{equation}\label{u}
	\begin{aligned}
		D_v L(Y_{X_xmt}(s),Y_{Xmt}(s)\otimes m,u_{X_xmt}(s),s;P_{X_xmt}(s),Q_{X_xmt}(s))=0,\\
		\text{a.e.}\ s\in[t,T],\  \text{a.s.}\ dm(x), \ \text{a.s.}\ d\mathbb{P}(\omega),
	\end{aligned}
\end{equation}
such that the process $(\bar{Y}_{\bx mt}(s),\bar{u}_{\bx mt}(s),\bar{P}_{\bx mt}(s),\bar{Q}_{\bx mt}(s))$ is an independent copy of $(Y_{Xmt}(s),u_{Xmt}(s),P_{Xmt}(s),Q_{Xmt}(s))$. For the Lagrangian $L$, we define $\hat{v}(x,m,s;p,q)$ satisfying the following:
\begin{align}\label{hv'}
	D_v L \left(x,m,\hat{v}(x,m,s;p,q),s;p,q\right)=0,
\end{align}
and then define $\hv:\hr_m \times[0,T]\times\hr_m\times\hr_m^n\to \ur_m$ as
\begin{align}\label{v}
	\left.\hv(X,s;P,Q)\right|_x:=\hat{v}(X_x,X\otimes m,s;P_x,Q_x),\quad x\in\brn.
\end{align}
From \eqref{H''} and \eqref{v}, we know that
\begin{align*}
	\hv(X,s;P,Q)=\argmin_{V\in \ur_m}\lr (X,V,s;P,Q),\quad (X,s,P,Q)\in \hr_m \times[0,T]\times {\hr}_m\times {\hr}_m^n.
\end{align*}
Then, we know that $u_{X_\cdot mt}(\cdot)$ defined in \eqref{u} coincides with $U_{Xt}(\cdot)$ as defined in \eqref{U}. Therefore, this FBSDEs \eqref{FB:3}-\eqref{u} coincide with the FBSEDs \eqref{FB:2}-\eqref{U}. 

\begin{remark}	
	From above, we can see that the optimal control $u_{X_xmt}(s)$ for the mean field control problem \eqref{eq:3-21}-\eqref{eq:3-22} is a feedback one, namely:
	\begin{align*}
		u_{X_xmt}(s)=\hat{v}\left(Y_{X_xmt}(s),Y_{Xmt}(s)\otimes m,s;P_{X_xmt}(s),Q_{X_xmt}(s)\right),
	\end{align*}
	and we can have the regularity for the function $\hat{v}$ in accordance with the conditions of the coefficient functions (we refer to [\cite{AB6},\cite{AB5},\cite{AB9}]). Therefore, we know that the system of FBSDEs \eqref{FB:3}-\eqref{u} has a unique adapted solution. We can see that the monotonicity conditions for the FBSDEs of our mean field control problem are actually the convexity conditions of the coefficient functions. Certainly, it is different from mean field games. Indeed, when we consider a mean field game, the associated FBSDEs should be
	\small
	\begin{equation}\label{FB:5}
		\left\{
		\begin{aligned}
			&Y_{X_xmt}(s)=X_x+\int_t^s f(Y_{X_xmt}(r),Y_{Xmt}(r)\otimes m,u_{X_xmt}(r),r)dr\\
			&\qquad\qquad\qquad +\sum_{j=1}^n\int_t^s \sigma^j(Y_{X_xmt}(r),Y_{Xmt}(r)\otimes m,u_{X_xmt}(r),r)dw_j(r),\\
			&P_{X_xmt}(s)=D_xg_T(Y_{X_xmt}(T),Y_{Xmt}(T)\otimes m)\\
			&\qquad\qquad\qquad +\int_s^T \big[D_x {L}(Y_{X_xmt}(r),Y_{Xmt}(r)\otimes m,u_{X_xmt}(r),r;P_{X_xmt}(r),Q_{X_xmt}(r))\big]dr\\
			&\qquad\qquad\qquad -\sum_{j=1}^n \int_s^T Q^j_{X_xmt}(r)dw_j(r),
		\end{aligned}
		\right.
	\end{equation}
    \normalsize
	with $u_{X_x mt}(s)$ satisfying the optimal condition \eqref{u}. Note that the derivative with respect to the distribution term does not appear. Therefore, the monotonicity conditions for $g$ and $g_T$ are not the convexity conditions anymore. For instance, take $g_T$ as an example. In view of \eqref{FB:5}, the monotonicity condition for $g_T$ in mean field games should be
	\begin{align}\label{mono_7}
		\left(D_x g_T(X_x',X'\otimes m)-D_x g_T(X_x,X\otimes m),\ X'_x-X_x\right)_{\hr_m}\geq 0,\quad X,X'\in\hr_m.
	\end{align}
	Note that
	\begin{align*}
		&\left(D_x g_T(X_x',X'\otimes m)-D_x g_T(X_x,X\otimes m),\ X'_x-X_x\right)_{\hr_m}\\
		=\ &\e\left[\int_\brn \left(\left(D_x g_T\left(X_x',X'\otimes m\right)-D_x g_T\left(X_x,X\otimes m\right)\right)\right)^*  \left(X'_x-X_x\right) dm(x)\right]\\
		=\ &\e\left[ \left(\left(D_x g_T\left(X_\eta',\lr X_\eta'\right)-D_x g_T\left(X_\eta,X_\eta\right)\right)\right)^* \left(X'_\eta-X_\eta\right) \right],
	\end{align*}
	when $\eta$ is equipped with a probability $m\otimes\mathbb{P}|_{[0,t]}$, where $\mathbb{P}|_{[0,t]}$ is generated by $w(s),\ s\in[0,t]$. Therefore, condition \eqref{mono_7} concides with the displacement monotonicity
	\begin{align}\label{mono_8}
		\e\left[\left(\left(D_x g_T\left(\eta^2,\lr \eta^2\right)-D_x g_T\left(\eta^1,\lr \eta^1\right)\right)\right)^*  \left(\eta^2-\eta^1\right)\right]\geq 0,
	\end{align}
	which is proposed by Ahuja [\cite{SA}]. When $h$ is continuously differentiable in $x$ and is also convex in $x$, the displacement monotonicity \eqref{mono_8} is satisfied if the following condition is satisfied
	\begin{align}\label{mono_9}
		\e\left[g_T\left(\eta^1,\lr \eta^1\right)+g_T\left(\eta^2,\lr \eta^2\right)-g_T\left(\eta^1,\lr \eta^2\right)-g_T\left(\eta^2,\lr \eta^1\right)\right]\geq 0.
	\end{align}
	Condition \eqref{mono_9} is called well-known Lasry-Lions monotonicity, extensively used in the literature [\cite{CP},\cite{CDLL},\cite{book_mfg},\cite{CJF}].	For further discussion on monotonicity conditions for mean field control problem and mean field game, we also refer to Remark 3.1 in [\cite{AB5}] and Section 5.3 in [\cite{AB9}].	 
\end{remark}

\section*{Acknowledgments}

Alain Bensoussan is supported by the National Science Foundation under grant NSF-DMS-2204795. This work also constitutes part of Ziyu Huang's Ph.D. dissertation at Fudan University, China. Phillip Yam acknowledges the financial supports from HKGRF-14301321 with the project title ``General Theory for Infinite Dimensional Stochastic Control: Mean Field and Some Classical Problems'', and HKGRF-14300123 with the project title ``Well-posedness of Some Poisson-driven Mean Field Learning Models and their Applications''.

\appendix

\section{Proof of Lemma~\ref{lem:05}}\label{pf:lem:05}

We first prove \eqref{lem02_1}. Using the SDE in \eqref{intr_3} and Assumption (A1), we have for $s\in[t,T]$,
\begin{align*}
	\frac{d}{ds}\left\|X^V(s)\right\|^2_{\hr_m}&=2\left(X^V(s),F(X^V(s),V(s),s)\right)_{\hr_m}+\sum_{j=1}^n\left\|A^j (X^V(s),V(s),s)\right\|^2_{\hr_m}\\
	&\le C(L) \left(1+\left\|X^V(s)\right\|^2_{\hr_m}+\left\|V(s)\right\|_{\ur_m}^2\right).
\end{align*}
From Gr\"onwall's inequality, we obtain \eqref{lem02_1}. We then prove the estimate \eqref{lem04_1} for $\mathcal{D}_{\tv}X^V(s)$. By Equation \eqref{tx} and Assumption (A1), we have
\begin{align*}
	&\frac{d}{ds}\left\|\mathcal{D}_{\tv}X^V(s)\right\|^2_{\hr_m}\\
	=\ &2\left(\mathcal{D}_{\tv}X^V(s), D_X F\left(X^V(s),V(s),r\right)\left(\mathcal{D}_{\tv}X^V(s)\right)+D_V F\left(X^V(s),V(s),s\right)\left(\tv(s)\right)\right)_{\hr_m}\\
	&\quad +\sum_{j=1}^n \left\| D_X A^j\left(X^V(s),V(s),r\right)\left(\mathcal{D}_{\tv}X^V(s)\right)+D_V A^j\left(X^V(s),V(s),s\right)\left(\tv(s)\right) \right\|_{\hr_m}^2\\
	\le\ & C(L)\left(\left\|\mathcal{D}_{\tv}X^V(s)\right\|^2_{\hr_m}+ \left\|\tv(s)\right\|^2_{\ur_m}\right).
\end{align*}
By applying Gr\"onwall's inequality, we deduce \eqref{lem04_1}. We next prove \eqref{lem05_1}. It is obvious that $V^\epsilon\in L^2_{\mathcal{W}_{tX}}(t,T;\ur_m)$. We set $Y^\epsilon(s):=\frac{1}{\epsilon}\left(X^\epsilon(s)-X^V(s)\right)$. We first establish the uniform boundedness of the norm $\|Y^\epsilon(s)\|_{\hr_m}$. From the equation in \eqref{intr_3} and Assumption (A1), we have for $s\in[t,T]$,
\begin{equation}\label{delta_X}
	\begin{aligned}
		Y^\epsilon(s)=&\int_t^s \bigg[\int_0^1 D_X F\left(X^V(r)+\lambda\epsilon Y^\epsilon(r),V^\epsilon(r),r\right)(Y^\epsilon(r))d\lambda\\
		&\quad\qquad +\int_0^1 D_V F\left(X^V(r),V(r)+\lambda\epsilon\tv(r),r\right)\left(\tv(r)\right) d\lambda\bigg]dr\\
		&+\sum_{j=1}^n\int_t^s \bigg[\int_0^1 D_X A^j\left(X^V(r)+\lambda\epsilon Y^\epsilon(r),V^\epsilon(r),r\right)\left(Y^\epsilon(r)\right)d\lambda\\
		&\quad\qquad\qquad +\int_0^1 D_V A^j\left(X^V(r),V(r)+\lambda\epsilon\tv(r),r\right)\left(\tv(r)\right)d\lambda\bigg]dw_j(r).
	\end{aligned}
\end{equation}	
From Assumption (A1) and Cauchy's inequality, we have for $s\in(t,T]$,
\begin{align*}
	\|Y^\epsilon(s)\|^2_{\hr_m}&\le C(L,T)\left[\int_t^s\left( \|Y^\epsilon(r)\|^2_{\hr_m}+ \left\|\tv(r)\right\|^2_{\ur_m}\right)dr\right].
\end{align*}
By applying Gr\"onwall's inequality, we have 
\begin{align}\label{lem03_1}
	\sup_{t\le s\le T}\|Y^\epsilon(s)\|_{\hr_m}\le C(L,T)\left\|\tv\right\|_{L^2(t,T;\ur_m)}.
\end{align}	
We denote by $\Delta^\epsilon(s):=Y^\epsilon(s)-\tx^{V,\tv}(s)$. From \eqref{tx}, \eqref{delta_X} and Assumption (A1), we have for $s\in[t,T]$,
\begin{align*}
	\Delta^\epsilon(s)=&\int_t^s \bigg[\int_0^1 \Big[D_X F\left(X^V(r)+\lambda\epsilon Y^\epsilon(r),V^\epsilon(r),r\right)\left(Y^\epsilon(r)\right)\\
	&\quad\qquad\qquad -D_X F\left(X^V(r)+\lambda\epsilon Y^\epsilon(r),V^\epsilon(r),r\right)\left(\mathcal{D}_{\tv}X^V(r)\right)\Big]d\lambda\\
	&\quad\qquad +\int_0^1 \Big[D_X F\left(X^V(r)+\lambda\epsilon Y^\epsilon(r),V^\epsilon(r),r\right)\left(\mathcal{D}_{\tv}X^V(r)\right)\\
	&\quad\qquad\qquad\qquad -D_X F\left(X^V(r),V(r),r\right)\left(\mathcal{D}_{\tv}X^V(r)\right) \Big]d\lambda\\
	&\quad\qquad +\int_0^1 \Big[D_V F\left(X^V(r),V(r)+\lambda\epsilon\tv(r),r\right)\left(\tv(r)\right)\\
	&\quad\qquad\qquad\qquad -D_V F\left(X^V(s),V(s),s\right)\left(\tv(r)\right)\Big]d\lambda\bigg]dr\\
	&+\sum_{j=1}^n\int_t^s\bigg[\int_0^1 \Big[D_X A^j\left(X^V(r)+\lambda\epsilon Y^\epsilon(r),V^\epsilon(r),r\right)\left(Y^\epsilon(r)\right)\\
	&\qquad\qquad\qquad\qquad -D_X A^j\left(X^V(r)+\lambda\epsilon Y^\epsilon(r),V^\epsilon(r),r\right)\left(\mathcal{D}_{\tv}X^V(r)\right)\Big]d\lambda\\
	&\quad\qquad\qquad +\int_0^1 \Big[D_X A^j\left(X^V(r)+\lambda\epsilon Y^\epsilon(r),V^\epsilon(r),r\right)\left(\mathcal{D}_{\tv}X^V(r)\right)\\
	&\quad\qquad\qquad\qquad\qquad -D_X A^j\left(X^V(r),V(r),r\right)\left(\mathcal{D}_{\tv}X^V(r)\right)\Big]d\lambda\\
	&\quad\qquad\qquad +\int_0^1 \Big[D_V A^j\left(X^V(r),V(r)+\lambda\epsilon\tv(r),r\right)\left(\tv(r)\right)\\
	&\quad\qquad\qquad\qquad\qquad -D_V A^j\left(X^V(r),V(r),r\right)\left(\tv(r)\right)\Big]d\lambda\bigg]dw_j(r).
\end{align*}
From Assumption (A1) and Cauchy's inequality, we have    
\begin{align*}
	\|\Delta^\epsilon(s)\|^2_{\hr_m}\le C(L,T)&\left[\int_t^s \|\Delta^\epsilon(r)\|^2_{\hr_m} dr+\mathcal{I}(\epsilon)\right],
\end{align*}
where
\begin{align*}
	\mathcal{I}(\epsilon):=&\int_t^T\int_0^1 \bigg[\Big\|D_X F\left(X^V(s)+\lambda\epsilon Y^\epsilon(s),V^\epsilon(s),s\right)\left(\mathcal{D}_{\tv}X^V(s)\right)\\
	&\quad\qquad\qquad -D_X F\left(X^V(s),V(s),s\right)\left(\mathcal{D}_{\tv}X^V(s)\right)\Big\|^2_{\hr_m} \\
	&\qquad\qquad +\Big\|D_V F\left(X^V(s),V(s)+\lambda\epsilon\tv(s),s\right)\left(\tv(s)\right)\\
	&\qquad\qquad\qquad -D_V F\left(X^V(s),V(s),s\right)\left(\tv(s)\right)\Big\|^2_{\hr_m}\bigg] d\lambda ds\\
	&+\sum_{j=1}^n \int_t^T\int_0^1 \bigg[\Big\|D_X A^j\left(X^V(s)+\lambda\epsilon Y^\epsilon(s),V^\epsilon(s),s\right)\left(\mathcal{D}_{\tv}X^V(s)\right)\\
	&\qquad\qquad\qquad\qquad -D_X A^j\left(X^V(s),V(s),s\right)\left(\mathcal{D}_{\tv}X^V(s)\right)\Big\|^2_{\hr_m} \\
	&\quad\qquad\qquad\qquad +\Big\|D_V A^j\left(X^V(s),V(s)+\lambda\epsilon\tv(s),s\right)\left(\tv(s)\right)\\
	&\quad\qquad\qquad\qquad\qquad -D_V A^j\left(X^V(s),V(s),s\right)\left(\tv(s)\right)\Big\|^2_{\hr_m} \bigg] d\lambda ds.
\end{align*}
From Gr\"onwall's inequality, we have
\begin{align}\label{lem05_2}
	\sup_{t\le s\le T}\|\Delta^\epsilon(s)\|_{\hr_m}^2\le C(L,T)\mathcal{I}(\epsilon).
\end{align}
From Assumption (A1), estimates \eqref{lem04_1} and \eqref{lem03_1} and the dominated convergence theorem, we have
\begin{align}\label{lem05_3}
	\lim_{\epsilon\to0}I(\epsilon)=0.
\end{align}
From \eqref{lem05_2} and \eqref{lem05_3}, we have \eqref{lem05_1}.

\section{Proof of Lemma~\ref{lem:1}}\label{pf:lem:1}

From \eqref{G} and Assumption (B1), we have for $(X,V,s)\in \hr_m\times \ur_m\times[0,T]$,
\begin{align*}
	F(X,V,s)|_x&\le l \left(1+|X_x|+|V_x|+W_2(X\otimes m,\delta_0)\right)\le L(1+|X_x|+|V_x|+\|X\|_{\hr_m}),
\end{align*}
therefore, we have 
\begin{align}\label{lem1_1}
	\|F(X,V,s)\|_{\hr_m}\le C(l)(1+\|X\|_{\hr_m}+\|V\|_{\ur_m}).
\end{align}	
From \eqref{F} and Assumption (B2), we have
\begin{align*}
	|G(X,V,s)|\le C(l)\left(1+\|X\|_{\hr_m}^2+\|V\|_{\ur_m}^2\right).
\end{align*}
For $X,\tx\in \hr_m$ and $\epsilon\in(0,1)$, we have
\footnotesize
\begin{align*}
	&\frac{1}{\epsilon}\left[\left.F\left(X+\epsilon\tx,V,s\right)\right|_x-\left.F(X,V,s)\right|_x\right]\\
	=&\int_0^1\left(D_x f\left(X_x+\lambda\epsilon\tx_x,\left(X+\epsilon\tx\right)\otimes m,V_x,s\right)\right)^*\tx_x d\lambda\\
	&+\int_0^1\int_0^1\bar{\e}\bigg[\int_\brn \left(D_\xi \frac{d f}{d\nu}\left(X_x,(1-\lambda)X\otimes m+\lambda(X+\epsilon\tx)\otimes m,V_x,s\right)\left(\bar{X}_y+\delta\epsilon\bar{\tx}_y\right)\right)^*\bar{\tx}_y dm(y)\bigg]d\delta d\lambda.
\end{align*}
\normalsize
By the dominated convergence theorem, we obtain \eqref{lem1_2}. Similarly, by Fubini's theorem, we have
\begin{align*}
	&\lim_{\epsilon\to0}\frac{1}{\epsilon}\left[G\left(X+\epsilon\tx,V,s\right)-G(X,V,s)\right]\\
	=\ &\e\left[\int_{\brn} (D_x g(X_x,X\otimes m,V_x,s))^*\tx_x dm(x)\right]\\
	& +\e\left[\int_\brn \bar{\e} \left( \int_\brn \left(D_\xi \frac{d g}{d\nu}(X_x,X\otimes m,V_x,s)\left(\bar{X}_y\right)\right)^*\bar{\tx}_y dm(y)\right)dm(x)\right]\\
	=\ &\e\bigg[\int_\brn \bigg[D_x g(X_x,X\otimes m,V_x,s)\\
	&\qquad\qquad +\bar{\e}\left(\int_\brn D_\xi \frac{d g}{d\nu}(\bar{X}_y,X\otimes m,\bar{V}_y,s)(X_x) dm(y)\right)\bigg]^*\tx_x dm(x)\bigg],
\end{align*}
from which we deduce \eqref{lem2_2}. Arguments for \eqref{lem1_3}, \eqref{lem1_2'}, \eqref{lem1_3'}, \eqref{lem2_3} and \eqref{lem2_4} are similar. Other arguments for (A1) and (A2) are direct consequences of \eqref{lem1_2}-\eqref{lem2_4}. We next prove \eqref{lem3_2}. For any $(X,V,s,P,Q)\in \hr_m\times \ur_m \times[0,T]\times \hr_m\times\hr_m^n$ and $\tx\in \hr_m$, from Fubini's theorem, we have
\small
\begin{align*}
	&D_X\lr(X,V,s;P,Q)\left(\tx\right)\\
	=\ &\e\Bigg[\int_\brn \Bigg(P_x^* \left.D_X F(X,V,s)\left(\tx\right)\right|_x +\sum_{j=1}^n  (Q^j_x)^* \left.D_X A^j(X,V,s)\left(\tx\right)\right|_x \\
	&\qquad\qquad +\left(\left.D_X G(X,V,s)\right|_x\right)^* \tx_x \Bigg)dm(x)\Bigg]\\
	=\ &\e \left[\int_\brn P_x^* \left(D_x f(X_x,X\otimes m,V_x,s)\right)^*\tx_xdm(x)\right]\\
	& +\e\left[\int_\brn\bar{\e}\left( \int_\brn P_x^* \left(D_\xi \frac{d f}{d\nu}(X_x,X\otimes m,V_x,s)\left(\bar{X}_y\right)\right)^*\bar{\tx}_y dm(y)\right) dm(x)\right]\\
	& +\sum_{j=1}^n \e \left[\int_\brn (Q^j_x)^* \left(D_x \sigma^j(X_x,X\otimes m,V_x,s)\right)^* \tx_xdm(x)\right]\\
	& +\sum_{j=1}^n\e\left[\int_\brn\bar{\e}\left( \int_\brn (Q^j_x)^* \left(D_\xi \frac{d \sigma^j}{d\nu}(X_x,X\otimes m,V_x,s)\left(\bar{X}_y\right)\right)^*\bar{\tx}_y dm(y)\right) dm(x)\right]\\
	&+\e \bigg[ \int_\brn \bigg[D_x g(X_x,X\otimes m,V_x,s)\\
	&\quad\qquad\qquad +\bar{\e}\left(\int_\brn D_\xi \frac{d g}{d\nu}\left(\bar{X}_y,X\otimes m,\bar{V}_y,s\right)(X_x) dm(y)\right)\bigg]^* \tx_xdm(x)\bigg]\\
	=\ &\e \Bigg[ \int_\brn \Bigg(D_x f(X_x,X\otimes m,V_x,s)P_x +\sum_{j=1}^n  D_x \sigma^j(X_x,X\otimes m,V_x,s){Q^j_x}\\
	&\qquad\qquad +D_x g(X_x,X\otimes m,V_x,s)\\
	&\qquad\qquad +\bar{\e}\left( \int_\brn D_\xi \frac{d f}{d\nu}\left(\bar{X}_y,X\otimes m,\bar{V}_y,s\right)(X_x) P_x dm(y)\right)\\
	&\qquad\qquad +\sum_{j=1}^n\bar{\e}\left( \int_\brn  D_\xi \frac{d \sigma^j}{d\nu}\left(\bar{X}_y,X\otimes m,\bar{V}_y,s\right)(X_x) {Q^j_x} dm(y)\right)\\
	&\qquad\qquad+\bar{\e}\left(\int_\brn D_\xi \frac{d g}{d\nu}\left(\bar{X}_y,X\otimes m,\bar{V}_y,s\right)(X_x) dm(y)\right)\Bigg)^* \tx_xdm(x)\Bigg],
\end{align*}
\normalsize
from which we obtain \eqref{lem3_2}. Similarly, we have \eqref{lem3_3}.

\end{document}